\documentclass[conference]{IEEEtran}
\IEEEoverridecommandlockouts

\usepackage{subfigure}
\usepackage{amssymb, amsmath, amsfonts}
\usepackage{empheq}
\usepackage{caption, cite}
\usepackage{graphicx}
\usepackage{optidef}
\usepackage{amsthm}
\usepackage{tikz}
\usepackage{amsmath}
\usetikzlibrary{shadows,arrows,positioning}
\pgfdeclarelayer{background}
\pgfdeclarelayer{foreground}
\pgfsetlayers{background,main,foreground}

\usepackage[linesnumbered,ruled,vlined]{algorithm2e}
\usepackage{booktabs}
\usetikzlibrary{external}
\usepackage{pgfplots}
\usepackage{cases}
\theoremstyle{definition}
\theoremstyle{remark}

\newtheorem{lem}{Lemma}
 \newtheorem{ass}{Assumption}

 \newtheorem{remark}{Remark}
 
\newtheorem{definition}{Definition}
\newtheorem{theorem}{Theorem}
\begin{document}
\title{ \hspace*{\fill} \\\hspace*{\fill} \\ \LARGE \bf{A Communication-Efficient and Differentially-Private\\ Distributed Generalized Nash Equilibrium Seeking Algorithm\\ for Aggregative Games}}
\author{Wenqing Zhao, Antai Xie, Yuchi Wu, Xinlei Yi, and Xiaoqiang Ren
\thanks{W. Zhao, A. Xie, Y. Wu and X. Ren are with the School of Mechatronic Engineering and Automation, Shanghai University, Shanghai, China. Emails: \{wqzhao, xatai, wuyuchi, xqren\}@shu.edu.cn. }
\thanks{X. Yi is with Department of Control Science and Engineering, College of Electronics and Information Engineering, Tongji University, Shanghai, 201804, China. Email: xinleiyi@tongji.edu.cn.}
\thanks{X. Ren is also with Key Laboratory of Marine Intelligent Unmanned Swarm Technology and System, Ministry of Education, Shanghai 200444, China.}
\thanks{The work was supported in part by the National Natural Science Foundation of China under Grant 62273223,  62336005, and 62461160313, the Project of Science and Technology Commission of Shanghai Municipality under Grant 22JC1401401 and the Fundamental Research Funds for the Central Universities under Grant 08002150267.}
}

\maketitle
\begin{abstract}
This paper studies the distributed generalized Nash equilibrium seeking problem for aggregative games with coupling constraints, where each player optimizes its strategy depending on its local cost function and the estimated strategy aggregation. The information transmission in distributed networks may go beyond bandwidth capacity and eventuate communication bottlenecks. Therefore, we propose a novel communication-efficient distributed generalized Nash equilibrium seeking algorithm, in which the communication efficiency is improved by event-triggered communication and information compression methods. The proposed algorithm saves the transmitted rounds and bits of communication simultaneously. Specifically, by developing precise step size conditions, the proposed algorithm ensures provable convergence, and is proven to achieve $(0,\delta)$-differential privacy with a stochastic quantization scheme. In the end, simulation results verify the effectiveness of the proposed algorithm.
 \end{abstract}
\begin{IEEEkeywords}
Differential privacy, event-triggered communication, generalized Nash equilibrium, information  compression
\end{IEEEkeywords}

\section{Introduction}\label{introduction} 
Aggregative games model diverse applications, including EV charging \cite{7879192} and atomic routing \cite{roughgarden2007routing}, where each player's cost relies on both its strategy and the aggregation of others'. Despite the complexity of integrating game theory, physical interdependencies constrain players' strategies \cite{8851259}. Generalized Nash equilibrium (GNE) emerges as a solution, where players minimize individual costs while satisfying coupling constraints. GNE, a fundamental concept, has been extensively studied in power systems \cite{wang2021distributed}, traffic networks \cite{9749902}, and unmanned systems \cite{dai2024distributed}. 

The proliferation of networked systems has led to a growing emphasis on distributed algorithm design, where players can only interact with their neighbors within the network. To seek GNE, many distributed approaches have been proposed \cite{LIANG2017179,9137646, ZOU2021109535}. In many existing studies, coupling constraints are addressed through primal-dual reformulations. For example, \cite{LIANG2017179} proposed a distributed continuous-time algorithm for nonlinear aggregative games with linear coupling constraints. This was extended by \cite{9137646} to weight-unbalanced directed graphs, while \cite{ZOU2021109535} introduced a distributed observer to estimate opponents’ decisions. Although the primal--dual method features computationally friendly action updates at each iteration step, the distributed algorithms require a considerable amount of network resources. Transmissions of comprehensive data among neighboring players may suffer from communication bottlenecks attributable to coupling constraints. Developing an effective algorithm for identifying GNE in aggregative games continues to pose significant challenges.

To address the need of an efficient information transmission scheme in aggregative games, efforts have been made recently. Note that, event-triggered mechanisms are effectively methods to reduce the rounds of information exchanges between players, which have been extensively studied in \cite{huo2024distributed,ding2024distributed,wang2024adaptive}. For instance, \cite{huo2024distributed} proposed a stochastic event-triggered algorithm for exact Nash equilibrium (NE) seeking, while \cite{ding2024distributed} introduced an event- and switching-activated communication scheme. Under coupling constraints, \cite{wang2024adaptive} developed a dynamic event-triggered mechanism for non-convex nonsmooth aggregative games. These protocols pause information exchange when changes are negligible, saving communication resources. Despite the reduction in the communication rounds, the amount of data transmitted per iteration may still exceed the bandwidth limit. To tackle this challenge, compression mechanisms are developed to save communication transmitted bits. For example, \cite{10531790} developed a compression-based algorithm for NE seeking to save transmitted bits. \cite{chen2023efficient} further used stochastic compressor and event-trigger mechanism simultaneously in the distributed NE seeking algorithm design. Notably, existing distributed algorithms \cite{chen2022linear,chen2023efficient,10551403,10531790} neglect coupling constraints, making efficient GNE seeking in such settings a challenge. To address this, we integrate event-triggered and compression mechanisms to simultaneously reduce transmission rounds and bits.

Besides high communication costs, another challenge exists that there exists a potential for information leakage during direct information transmission between players. Differential privacy (DP) has become a critical tool for providing privacy protection across computer science, control engineering, and communication technology. Multiple types of research have been illustrated for distributed NE seeking with DP method \cite{9416892,10526393, wang2024ensuring, ZENG201920}. A privacy-preserving distributed algorithm for seeking the NE was designed in \cite{9416892} for aggregative games and \cite{10526393} focused on quadratic network games. \cite{wang2024ensuring} extended privacy-preserving mechanisms for NE seeking to directed graph. Considering coupling constraints, \cite{ZENG201920} proposed a GNE seeking strategy for multi-cluster games. Though \cite{wang2022ensure} investigated  GNE seeking with privacy preservation, information exchange efficiency was not considered. Privacy and communication efficiency are seldom explored together in distributed GNE seeking for aggregative games though it is of great interest.

The motivation of this paper is to study aggregative games with coupling constraints and to design an efficient distributed algorithm to seek GNE of the games. The principal contributions of this paper can be outlined as follows:
\begin{enumerate}
\item We provide a novel communication-efficient distributed GNE seeking algorithm (Algorithm 1), combining an event-triggered mechanism to decrease transmission rounds and a stochastic compressor to minimize transmitted bits. 

\item Besides achieving efficient communication, our algorithm ensures convergence to exact GNE by developing precise step size conditions (Theorem 1). In contrast to existing GNE seeking works ensuring accuracy \cite{wang2022ensure}, our proposed algorithm further reduces the communication cost.  

\item We show that $(0,\delta)$-differential privacy is achieved under a stochastic compressor (Theorem 2).  Particularly, the algorithm guarantees asymptotic convergence to the GNE and privacy protection simultaneously.
\end{enumerate}

The organization of this paper is as follows: Section II presents the fundamental concepts and establishes the formulation for the problem. Section III provides an event-triggered and compressed distributed GNE seeking algorithm with an analysis of convergence and privacy. Section IV provides simulation examples to confirm the obtained results, and Section V concludes the paper.

\emph{Notations:}
Let $\mathbb{R}$ ($\mathbb{R}^+$) denote the set of real (positive real) numbers, $\mathbb{Z}$ the integers, and $\mathbb{N}$ the natural numbers. For $n \in \mathbb{N}$, $\mathbb{R}^n$ ($\mathbb{R}^{n \times d}$) represents $n$-dimensional real vectors ($n \times d$ real matrices). Let $P^\mathrm{T}$ and $[P]_{ij}$ denote the transpose and $(i,j)$-th entry of matrix $P$, respectively. The gradient of $f$ at $\mathbf{x}$ is denoted by $\nabla f(\mathbf{x})$, with $\nabla_i f(\mathbf{x})$ as its $i$-th component. Define $\mathbf{1}_N$ ($\mathbf{0}_N$) as the $N$-dimensional all-ones (all-zeros) vector, and $I_d$ as the $d$-dimensional identity matrix. The inner product and Euclidean (induced-$2$) norm are denoted by $\langle \cdot, \cdot \rangle$ and $|\cdot|$, respectively. Let $\Pi_{\mathcal{D}}(\cdot)$ be the Euclidean projection onto a closed convex set $\mathcal{D}$. For probability and expectation, we use $\mathbb{P}(\cdot)$ and $\mathbb{E}[x]$. Finally, $\mathrm{col}(x_1,...,x_m)=[x_1^\mathrm{T},...,x_m^\mathrm{T}]^\mathrm{T}$ stacks vectors $x_1,...,x_m$.

\section{Preliminaries and Problem Formulation}
\subsection{Problem Formulation}
Let us examine an aggregative game comprising $N$ players, where each participant $i\in \mathcal{V}= \{1, 2,...,N\}$ is associated with a decision vector $x_i \in \Omega_i$, with $\Omega_i \subseteq \mathbb{R}^d$ representing a compact convex set, and they share the constraint
\begin{align}
g(\Bar{x}) \leq 0,~~~\Bar{x}=\frac{1}{N}\sum_{i=1}^N x_i \in \Upsilon \subset \mathbb{R}^d
\end{align}
where $g : \mathbb{R}^d \to \mathbb{R}$ is the constrained function. We assume that each player aims at minimizing its local cost function $J_i(x_i,h(\mathbf{x})):\Omega \to \mathbb{R}$, where $\mathbf{x}=\mathrm{col}(x_1,x_2,...,x_N) \in \Omega = \Omega_1 \times \Omega_2 \times ...\times \Omega_N$ and $h(\mathbf{x})=\Bar{x}$ is the aggregation function. The function $J_i(x_i,h(\mathbf{x}))$ is continuously differentiable and convex in $x_i$ for every fixed $h(\mathbf{x})$. In particular, the problem is formulated as follows:

\textit{Problem 1}: Each player $i$ intends to
\begin{mini*}
 {x_i \in \Omega_i} {J_i(x_i,\Bar{x})}{}{}
\addConstraint{g(\Bar{x})\leq 0.}  
\end{mini*}

The resolution of Problem 1 yields a GNE, characterized as follows.
\begin{definition}\cite{deng2021distributed}
 A strategy profile $\mathbf{x}^*=\mathrm{col}(x_1^*,x_2^*,...,x_N^*) \in \Omega \cap \Upsilon$ is called a GNE of the constrained game if
\begin{align}
J_i(x_i^*,h(\mathbf{x}^*)) \leq J_i(x_i,\frac{1}{N}x_i+\sum_{j \neq i} \frac{1}{N}x_j^*), \nonumber
\end{align}
for all $x_i:(x_i,x_{-i}^*)\in \Omega \cap \Upsilon$, where $x_{-i}^*=\mathrm{col}(x_1^*,\dots, x_{i-1}^*, x_{i+1}^*, \dots, x_N^*)$.
\end{definition}

If $J_i(x_i, h(x))$ is regarded as a function of $x$, then $J_i(x_i, h(x))$ can be written as $J_i(\mathbf{u}), u \in \Omega$ for simplism and the following assumptions about the gradient are associated with the GNE.

\begin{ass} \label{lipschitz}
\begin{enumerate}
\item 
 $J_i(\cdot)$ is Lipschitz continuous, i.e., there exists a constant $l_J$ such that for all $\mathbf{u}, \mathbf{v} \in \Omega$ and $i \in \mathcal{V}$, 
$$ \Vert J_i(\mathbf{u})-J_i(\mathbf{v})\Vert \leq l_J\|\mathbf{u}-\mathbf{v}\|.$$
\item $g(\cdot)$ is Lipschitz continuous, i.e., there exists a constant $l_g$ such that for all $u,v \in \mathbb{R}^d$, 
$$\Vert g(u)-g(v)\Vert \leq l_g\|u-v\|. $$
\end{enumerate}
The Lipschitz continuous condition implies $g(\cdot)$ is bounded on the compact set $\mathbb{R}^d$. In other words, there exists a constant $C_g>0$ such that $ \Vert g(\cdot)\Vert  \leq C_g.$
\end{ass}

\begin{ass}
\begin{enumerate}
\item
$\nabla J_i(\cdot)$ is Lipschitz continuous, i.e., there exists a constant $G_J$ such that for all $\mathbf{u}, \mathbf{v} \in \Omega$ and $i \in \mathcal{V}$, $$ \Vert \nabla_iJ_i(\mathbf{u})-\nabla_iJ_i(\mathbf{v})\Vert \leq G_J\|\mathbf{u}-\mathbf{v}\|.$$
\item  $\nabla_ig(\cdot)$ is Lipschitz continuous, i.e., there exists a constant $G_g$ such that for all $u,v \in \mathbb{R}^d$, $$\Vert \nabla_ig(u)-\nabla_ig(v)\Vert \leq G_g\|u-v\|. $$
\end{enumerate}
\end{ass}

Convexity and differentiability assumptions on function $J(\cdot)$, along with Assumption 2, guarantee the uniqueness and existence of NE points. This conclusion has been widely utilized in numerous articles \cite{liu2022distributed,belgioioso2020distributed, wang2024distributed}.

We denote $\lambda_i$ as the Lagrange multiplier of player $i$. The Lagrange function is $\mathcal{L}_i(\mathbf{x},\lambda_i)=J_i(x_i,\Bar{x})+\lambda_ig(\Bar{x})$. To seek GNE, a Karush-Kuhn-Tucker (KKT) condition for the considered game is determined in  \cite{zou2021continuous}, as 
\begin{align}
& x_i-\Pi_{\Omega_i}[x_i-(\nabla_iJ_i(x_i,\Bar{x})+\lambda_i\nabla_i g(\Bar{x}))]=0,\nonumber\\
&\lambda_i \geq 0, ~\lambda_i g(\Bar{x})=0, ~g(\Bar{x}) \leq 0,~i \in \mathcal{V}.\label{kkt}
\end{align}

A classical GNE seeking algorithm leveraging the projection operator is developed, with each player $i \in\mathcal{V}$ updating its local variables as follows: 
\begin{subequations}
\begin{align} 
x_{i,k+1}&=\Pi_{\Omega_i}[x_{i,k}-\gamma_k(\nabla_i J_i(x_{i,k},\Bar{x}_k)+\lambda_{i,k}\nabla_i  g(\Bar{x}_k))], \label{dynamica}\\
    \lambda_{i,k+1}&=\Pi_{\mathbb{R}^+}[\lambda_{i,k}+\gamma_kg(\Bar{x}_k)], \label{dynamicb}
\end{align}
\end{subequations}
where step sizes $\gamma_k>0$, $x_{i,k}$ and $\lambda_{i,k}$ is the decision and dual variable of player $i$ at iteration $k$, respectively.

Next, a lemma characterizing the fixed point $\mathbf{x}^*=\mathrm{col}(x_1^*,x _2^*,...,x_N^*)$ and $\mathbf{\lambda}^* = \mathrm{col}(\lambda_1^*,\lambda_2^*,...,\lambda_N^*)$ of dynamic (4) is presented as follows.
\begin{lem}\cite{cominetti2012modern}
   The fixed point $(\mathbf{x}^*, \mathbf{\lambda}^*)$ of (4) is also the GNE of Problem 1.
\end{lem}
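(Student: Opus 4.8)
The plan is to verify that every fixed point of \eqref{dynamica}--\eqref{dynamicb} solves the KKT system \eqref{kkt}, and then to invoke the known equivalence between \eqref{kkt} and a GNE of Problem~1. By construction, $(\mathbf{x}^*,\mathbf{\lambda}^*)$ is a fixed point of (4) precisely when $x_i^*=\Pi_{\Omega_i}[x_i^*-\gamma(\nabla_i J_i(x_i^*,\bar x^*)+\lambda_i^*\nabla_i g(\bar x^*))]$ and $\lambda_i^*=\Pi_{\mathbb{R}^+}[\lambda_i^*+\gamma\, g(\bar x^*)]$ for every $i\in\mathcal{V}$, with $\gamma>0$ the step size and $\bar x^*=\tfrac1N\sum_{j=1}^N x_j^*$. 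Since $\gamma>0$ only rescales the argument of the projection, the first identity is equivalent to the scaling-free version in \eqref{kkt}, and in particular $\mathbf{x}^*\in\Omega$ because each $x_i^*$ lies in the range of $\Pi_{\Omega_i}$.

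First I would rewrite the primal identity using the variational characterization of the Euclidean projection onto a closed convex set: $z=\Pi_{\Omega_i}[w]$ iff $z\in\Omega_i$ and $\langle w-z,\,y-z\rangle\le 0$ for all $y\in\Omega_i$. Taking $w=x_i^*-\gamma(\nabla_i J_i(x_i^*,\bar x^*)+\lambda_i^*\nabla_i g(\bar x^*))$ and $z=x_i^*$ and dividing by $\gamma>0$ yields $\langle \nabla_i J_i(x_i^*,\bar x^*)+\lambda_i^*\nabla_i g(\bar x^*),\,y-x_i^*\rangle\ge 0$ for all $y\in\Omega_i$, i.e.\ the first-order stationarity condition of the Lagrangian $\mathcal{L}_i(\cdot,\lambda_i^*)$ in $x_i$ over $\Omega_i$ with $x_{-i}=x_{-i}^*$. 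Next I would rewrite the dual identity using $\Pi_{\mathbb{R}^+}(a)=\max\{a,0\}$: from $\lambda_i^*=\max\{\lambda_i^*+\gamma g(\bar x^*),\,0\}$ we get $\lambda_i^*\ge 0$; if $\lambda_i^*>0$ then $g(\bar x^*)=0$, and if $\lambda_i^*=0$ then $g(\bar x^*)\le 0$, so in every case $\lambda_i^*\ge 0$, $g(\bar x^*)\le 0$ and $\lambda_i^* g(\bar x^*)=0$. Collecting these relations over $i\in\mathcal{V}$ shows that $(\mathbf{x}^*,\mathbf{\lambda}^*)$ satisfies \eqref{kkt}.

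It then remains to pass from \eqref{kkt} to the defining inequality of Definition~1. With $x_{-i}$ frozen at $x_{-i}^*$, the stationarity inequality derived above, together with $\lambda_i^*\ge 0$, $g(\bar x^*)\le 0$ and $\lambda_i^* g(\bar x^*)=0$, is exactly the KKT system of the convex program $\min_{x_i\in\Omega_i}\{J_i(x_i,\tfrac1N x_i+\sum_{j\neq i}\tfrac1N x_j^*):\ g(\tfrac1N x_i+\sum_{j\neq i}\tfrac1N x_j^*)\le 0\}$ with multiplier $\lambda_i^*$. Under the standing convexity/regularity hypotheses (convexity of $J_i$ in its first argument and of $g$, compact convex $\Omega_i$, and a suitable constraint qualification), KKT is sufficient for global optimality, so $J_i(x_i^*,\bar x^*)\le J_i(x_i,\tfrac1N x_i+\sum_{j\neq i}\tfrac1N x_j^*)$ for every feasible $x_i$; since $g(\bar x^*)\le 0$ also gives $\mathbf{x}^*\in\Omega\cap\Upsilon$, this is precisely the GNE condition. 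I expect this last implication to be the only genuinely non-routine point, since it relies on the sufficiency of KKT for a convex minimization rather than on bookkeeping; it is, however, classical and is already recorded in \cite{zou2021continuous,cominetti2012modern,deng2021distributed}, so I would cite it rather than reprove it. The converse direction --- that every GNE of Problem~1 is a fixed point of (4) --- then follows by reading the same chain of equivalences backwards.
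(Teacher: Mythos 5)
The paper does not prove this lemma at all --- it is imported verbatim by citation to \cite{cominetti2012modern}, with the KKT system \eqref{kkt} itself attributed to \cite{zou2021continuous} --- so there is no in-paper argument to match; your proposal supplies exactly the standard argument that citation stands for: the variational characterization of $\Pi_{\Omega_i}$ turns the primal fixed-point identity into Lagrangian stationarity, the $\max\{\cdot,0\}$ form of $\Pi_{\mathbb{R}^+}$ gives $\lambda_i^*\ge 0$, $g(\bar x^*)\le 0$, $\lambda_i^* g(\bar x^*)=0$, and sufficiency of KKT for the frozen-opponents convex program yields the inequality in Definition~1. That chain is sound, and your handling of the dual update and of the irrelevance of $\gamma>0$ (via the variational inequality, which you derive anyway) is correct. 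Two small points of precision. First, the sufficiency step needs convexity of the \emph{composite} maps $x_i\mapsto J_i\bigl(x_i,\tfrac1N x_i+\sum_{j\neq i}\tfrac1N x_j^*\bigr)$ and $x_i\mapsto g\bigl(\tfrac1N x_i+\sum_{j\neq i}\tfrac1N x_j^*\bigr)$; the paper's stated hypotheses ("convex in $x_i$ for every fixed $h(\mathbf{x})$", and only Lipschitz conditions on $g$) do not literally deliver this, so your parenthetical "convexity of $J_i$ in its first argument and of $g$" is the right implicit reading of the paper but should be stated as an assumption on the composite, since that is what the $J_i(x_i^*,\bar x^*)+\lambda_i^*g(\bar x^*)\le \mathcal{L}_i(x_i,\lambda_i^*)\le J_i(x_i,\cdot)$ chain actually uses. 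Second, no constraint qualification is needed for the direction the lemma asserts (fixed point $\Rightarrow$ GNE); it is only your closing remark about the converse that would require one, and that converse is not part of the statement, so it is best dropped rather than claimed to follow "by reading the equivalences backwards."
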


\subsection{Graph Theory}
We analyze $N$ players communicating through an undirected graph $\mathcal{G} = (\mathcal{V}, \mathcal{E})$, where $\mathcal{E} \subseteq \mathcal{V} \times \mathcal{V}$ represents the edges. An edge $(i, j)$ exists between players $i$ and $j$ if they can communicate. A path in the graph is a sequence of edges that connects a sequence of nodes. The graph is connected if there exists a path between every pair of nodes. The Laplacian matrix $A = [a_{ij}] \in \mathbb{R}^{N \times N}$ is defined such that $a_{ij} = 1$ if $(j, i) \in \mathcal{E}$ and $a_{ij} = 0$ otherwise. The diagonal entries are given by $a_{ii} = -\sum_{j \in \mathcal{N}_i} a_{ij}$ for all $i \in \mathcal{V}$, where $\mathcal{N}_i$ denotes the set of neighbors of node $i$. We assume the following assumption.
\begin{ass}
    The undirected graph $\mathcal{G}$ is connected. 
\end{ass}

The Assumption 3 implies the matrix $A = \{a_{ij} \} \in \mathbb{R}^{N\times N}$ is symmetric and satisfies $\mathbf{1}_N^\mathrm{T}A=\mathbf{0}_N^\mathrm{T}$, $A\mathbf{1}_N=\mathbf{0}_N$, and $\Vert I_N+A-\frac{\mathbf{1}_N\mathbf{1}_N^\mathrm{T}}{N}\Vert <1$.

\subsection{Compression Method}
To enhance communication efficiency, we employ a stochastic compressor to encode the original state information before transmission, satisfying the following assumption: 

\begin{ass}
For some constant $\sigma$ and any $x \in \mathbb{R}^d$, $\mathbb{E}[\mathcal{C}(x)|x] = x$ and $\mathbb{E}[\Vert \mathcal{C}(x) - x\Vert ^2] \leq \sigma^2$. The stochastic mechanism in each player’s compression is statistically independent. \label{compress}
\end{ass}
\begin{remark}
Compressors under Assumption \ref{compress} are commonly used in distributed algorithms, see, e.g., \cite{9224135,wang2022quantization,10531790}. 
\end{remark}
\subsection{Event-triggered Mechanism}
 We assume the following to hold at the start of iteration $k$, player $i$ decides whether the local data is worthy of transmission by an event-triggered condition as:
\begin{align}
\mathbb{I}_{i,k}=\left\{
\begin{aligned}
&1,~\text{if}~~\Vert v_{i,k}-\Tilde{v}_{i,k-1}\Vert \geq \tau_{i,k}, \\
&0,~\text{else},\\
\end{aligned}
\right.\label{eventtrigger}
\end{align} 
where $v_{i,k}$ is the current compressed value, $\Tilde{v}_{i,k-1}$ is the latest sent value, $\mathbb{I}_{i,k}$ is the triggering indicator and $\tau_{i,k}$ is the triggering thresholds. In this scenario, data exchange is initiated only if the deviation between the current compressed state and the last transmitted state meets or exceeds $\tau_{i,k}$. The triggering threshold is subject to the following assumption.
\begin{ass}
The thresholds $\{\tau_{i,k}\}_{k \geq 0}, i \in \mathcal{V}$ satisfies
$$\tau_{i,k} \geq 0, ~ \lim_{k \to \infty} \tau_{i,k} =0. $$\label{thresholds}
\end{ass}\begin{remark}
The decaying thresholds $\{\tau_{i,k}\}$ must ensure infinite-horizon information sharing. Without this, the diminishing difference between consecutive estimates would prevent event triggering after finite time. Assumption \ref{thresholds}, satisfied by common sequences (e.g., \cite{chen2023efficient,he2022event}), allows thresholds of the form $\tau_{i,k} = B_i \alpha_i^k$, where $B_i > 0$ and $0 < \alpha_i < 1$.
\end{remark}

\section{Main Results}
\subsection{Algorithm Design}
In this section, we develop an efficient GNE seeking strategy using the compressor and event-trigger mechanism based on dynamic (4) from Section II. At iteration step $k$, every player $i\in \mathcal{V}$ updates their local information consistent with 
\begin{subequations}
\begin{align}
x_{i,k+1}&=\Pi_{\Omega_i}[x_{i,k}-\gamma_k(\nabla_i J_i(x_{i,k},y_{i,k})+\lambda_{i,k}\nabla_i g(y_{i,k}))],\label{update1a}\\
\lambda_{i,k+1}&=\Pi_{\Lambda_i}[\lambda_{i,k}+\gamma_kg(y_{i,k})],\label{update1b}
\end{align}
\end{subequations}
where $\gamma_k>0$ is the step size, $\nabla_i J_i(x_i,y_i)=\{\frac{\partial J_i(x_i,\Bar{x})}{\partial x_i}+\frac{\partial J_i(x_i,\Bar{x})}{\partial \Bar{x}}\frac{\partial \Bar{x}}{\partial x_i} \}_{\Bar{x}=y_i}$ and the bound on $\lambda_i$ is formally established in \cite{nedic2009approximate} as $\Lambda_i=\{0 \leq ~ \lambda_i \leq \frac{J_i(\hat{\mathbf{x}})-\min_{x_i \in \Omega_i}\mathcal{L}_i(\mathbf{x},\lambda_i')}{\min\{-g(x')\}}=\Bar{\Lambda}_i\}$ for an arbitrary $\lambda_i'\geq 0$, $\hat{\mathbf{x}}=(\hat{x}_1,\dots, \hat{x}_N)$ is a Slater point of Problem 1 and $x'=\frac{1}{N}\sum_{i=1}^N \hat{x}_i$. In addition, $\hat{\mathbf{x}}$ can be either locally computed based on \cite{zhu2011distributed} or can be taken as a superset estimate as described in \cite{belgioioso2020distributed}. And $y_i$  is the intermediate variable which is defined as:
\begin{align}
 y_{i,k+1}=y_{i,k}+\eta_k\sum_{j\in \mathcal{N}_i}^N a_{ij}(\Tilde{v}_{j,k}-v_{i,k})+x_{i,k+1}-x_{i,k},\label{update2}
\end{align}
where $\eta_k>0$ is the step size,  $v_{i,k}=\mathcal{C}(y_{i,k})$ is the compressed version of $y_{i,k}$ and $\Tilde{v}_{j,k}$ is the latest triggered output from player $j$. To improve efficiency, the compressed value $v_{i,k}$ is exchanged by event-triggered condition (5). The detailed procedure of this algorithm is outlined in Algorithm 1.
\begin{algorithm}
    \SetAlgoLined
    \caption{Event-triggered and Compressed Distributed GNE Seeking Algorithm}
    \KwIn{Stopping time $K$, step sizes $\eta_k, \gamma_k$, thresholds $\tau_{i,k}$ for $i \in \mathcal{V}$}
	\textbf{Initialize:} $\mathbf{x}_0 \in \mathbb{R}^{Nd}$, $\mathbf{y}_0 = \mathbf{x}_0$.\\
 \For{each agent $i\in \mathcal{V}$}{
      Send $v_{i,0}=\mathcal{C}(y_{i,0})$ to neighbors and set $\Tilde{v}_{i,0}=v_{i,0}$.\\
    \For{$k=1,...,K$}{
    Take $v_{i,k}=\mathcal{C}(y_{i,k})$. \\ 
     \eIf{$\Vert v_{i,k}-\Tilde{v}_{i,k-1}\Vert  \geq \tau_{i,k}$}{Send $v_{i,k}$ to neighbors and set $\Tilde{v}_{i,k}=v_{i,k}$. }{Set $\Tilde{v}_{i,k}=\Tilde{v}_{i,k-1}$.}
     And do local updates:
      \begin{align}
x_{i,k+1}&=\Pi_{\Omega_i}[x_{i,k}-\gamma_k(\nabla_i J_i(x_{i,k},y_{i,k})\nonumber\\
&~~+\lambda_{i,k}\nabla_i g(y_{i,k}))],\nonumber\\
\lambda_{i,k+1}&=\Pi_{\Lambda_i}[\lambda_{i,k}+\gamma_kg(y_{i,k})]. \nonumber
\end{align}\\
    Update auxiliary variable:
    $$y_{i,k+1}=y_{i,k}+\eta_k\sum_{j\in \mathcal{N}_i}^N a_{ij}(\Tilde{v}_{j,k}-v_{i,k})+x_{i,k+1}-x_{i,k}.$$
	}
 }
 \textbf{Output:} $\mathbf{x}_{K+1}$
\end{algorithm}
\subsection{Convergence Analysis}
In this section, we present a rigorous analysis that demonstrates that Algorithm 1 drives all players’ decisions to $\mathbf{x}^*$, contingent on appropriately chosen step sizes.

We introduce the following assumption and lemmas to establish the main results.
\begin{ass}
    For $i\in \mathcal{V}$ and $k\in \mathbb{Z}^+$, the thresholds $\{\tau_{i,k}\}_{k \geq 0}$ satisfies
    \begin{align}
\sum_{k=0}^\infty \eta_k\max_{i \in \mathcal{V}}(\tau_{i,k})^2<\infty.
    \end{align}
   \label{weaktao}
\end{ass}

\begin{lem}
    (Lemma 4 of\cite{wang2024robust}) Under Assumption 3, for a positive sequence $\{\eta_k\}$ satisfying $\sum_{k=0}^\infty \eta_k^2<\infty$, there exists a $T\geq 0$ such that $\Vert I_N+\eta_kA-\frac{\mathbf{1}_N\mathbf{1}_N^\mathrm{T}}{N}\Vert <1-\eta_k|\rho_2|$ holds for all $k\geq T$, where $\rho_2$ is $A$'s second largest eigenvalue in terms of their real parts. \label{rhoeigenvalue}
\end{lem}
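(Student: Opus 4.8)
\emph{Proof proposal.} Since this is quoted from \cite{wang2024robust}, one could simply invoke it; but the argument is short and I would reconstruct it as follows. The plan is to reduce the matrix norm to a scalar estimate over the eigenvalues of $A$, and then use $\eta_k\to 0$ to locate the threshold $T$. First I would record the spectral facts behind Assumption 3: $A$ is the negative Laplacian of a connected undirected graph, hence symmetric and negative semidefinite, with $A\mathbf{1}_N=\mathbf{0}_N$ and $\mathbf{1}_N^\mathrm{T}A=\mathbf{0}_N^\mathrm{T}$; its eigenvalues are real and can be ordered $0=\rho_1>\rho_2\ge\cdots\ge\rho_N$, where connectivity makes the zero eigenvalue simple, so that $\rho_2<0$, with eigenvector $\mathbf{1}_N$ for $\rho_1$. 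In particular $|\rho_2|>0$ and $|\rho_2|=\min_{j\ge 2}|\rho_j|$.

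Next I would diagonalize $M_k:=I_N+\eta_k A-\frac{\mathbf{1}_N\mathbf{1}_N^\mathrm{T}}{N}$. It is symmetric, $\mathbf{1}_N$ is an eigenvector with eigenvalue $1+\eta_k\cdot 0-1=0$, and on $\mathbf{1}_N^\perp$ the rank-one term vanishes, so $M_k$ acts there as $I_N+\eta_k A$, with eigenvalues $1+\eta_k\rho_j$ for $j\ge 2$. Hence $\|M_k\|=\max_{j\ge 2}|1+\eta_k\rho_j|$. Because $\sum_k\eta_k^2<\infty$ forces $\eta_k\to 0$, there is $T\ge 0$ with $\eta_k(|\rho_N|+|\rho_2|)\le 2$ for all $k\ge T$. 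For such $k$ and every $j\ge 2$, writing $1+\eta_k\rho_j=1-\eta_k|\rho_j|$ and arguing by the sign of this quantity: if $\eta_k|\rho_j|\le 1$ then $|1+\eta_k\rho_j|=1-\eta_k|\rho_j|\le 1-\eta_k|\rho_2|$ since $|\rho_j|\ge|\rho_2|$; if $\eta_k|\rho_j|>1$ then $|1+\eta_k\rho_j|=\eta_k|\rho_j|-1\le\eta_k|\rho_N|-1\le 1-\eta_k|\rho_2|$ by the choice of $T$. Thus $\|M_k\|\le 1-\eta_k|\rho_2|<1$ for all $k\ge T$, which is the asserted bound.

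The only delicate point is the most negative direction $\rho_N$: for moderate $\eta_k$ the term $|1+\eta_k\rho_N|$ can exceed $1-\eta_k|\rho_2|$, so the existence of $T$ genuinely relies on $\eta_k$ eventually falling below $2/(|\rho_N|+|\rho_2|)$, which is precisely what square-summability of $\{\eta_k\}$ delivers; everything else is routine. I would also observe that the bound is attained at the $\rho_2$-direction, so in fact $\|M_k\|=1-\eta_k|\rho_2|$ for $k\ge T$, and the strict sign is harmless because $\eta_k|\rho_2|>0$ already yields $\|M_k\|<1$, which is all that the convergence analysis uses. If instead one follows \cite{wang2024robust} with a possibly non-symmetric weight matrix whose second-largest eigenvalue may be complex, the eigen-decomposition step would be replaced by a first-order perturbation estimate $\|M_k\|\le 1-\eta_k|\rho_2|+O(\eta_k^2)$, with the higher-order term absorbed for $k$ large.
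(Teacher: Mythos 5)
Your proposal is correct in substance, but note that the paper itself gives no proof of this lemma at all: it is imported verbatim as Lemma~4 of \cite{wang2024robust}, so there is nothing internal to compare against. What you have done is supply a self-contained spectral proof valid for the specific $A$ defined in this paper (symmetric negative Laplacian of a connected undirected graph): simultaneous diagonalization of $A$ and $\frac{\mathbf{1}_N\mathbf{1}_N^{\mathrm{T}}}{N}$, the identity $\Vert M_k\Vert=\max_{j\ge 2}|1+\eta_k\rho_j|$, and the elementary case split on whether $\eta_k|\rho_j|\le 1$, with $T$ obtained from $\eta_k\to 0$ (which indeed follows from square-summability). That argument is sound, and it is arguably more transparent than the citation route because the cited lemma is stated for a more general weight matrix (hence the ``real parts'' phrasing for $\rho_2$), whereas here symmetry makes everything exact; your closing remark about replacing the eigendecomposition by a first-order perturbation bound with an $O(\eta_k^2)$ remainder is the right way to see how the general version would go.

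One point deserves emphasis rather than a parenthetical: as you yourself observe, for the symmetric case the bound is attained, i.e.\ $\Vert M_k\Vert=1-\eta_k|\rho_2|$ once $\eta_k|\rho_j|\le 1$ for all $j$, so the \emph{strict} inequality in the statement as quoted cannot literally hold; only $\Vert M_k\Vert\le 1-\eta_k|\rho_2|$ is provable. This is a defect of the transcribed statement, not of your argument, and it is harmless downstream: the only place the lemma is used (the bound on $\Vert A_k(\mathbf{y}_k-\mathbf{1}_N\otimes\bar{x}_k)\Vert$ in the proof of Lemma~\ref{ybarx}) requires only the non-strict contraction factor $1-\eta_k|\rho_2|$. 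So your proof, read as establishing the non-strict version, fully supports the way the lemma is invoked in the paper.
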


\begin{lem}\cite{wang2023tailoring}
    Let $\{p_k\}$,$\{a_k\}$, and $\{b_k\}$ be random nonnegative scalar sequences, and $\{q_k\}$ be a deterministic nonnegative scalar sequence satisfying $ \sum_{k=0}^\infty a_k <\infty$ a.s., $\sum_{k=0}^\infty q_k =\infty$, $\sum_{k=0}^\infty b_k <\infty$ a.s., and 
    \begin{align}
\mathbb{E}[p_{k+1}|\mathcal{F}_k]\leq (1+a_k-q_k)p_k+b_k, \forall k \geq 0, a.s.
    \end{align}
    where $\mathcal{F}_k$ is the $\sigma$-algebra generated by $\{p_l,a_l,p_l,0\leq l\leq k\}$. Then $\lim_{k \to \infty}p_k =0$ and $\sum_{k=0}^\infty q_kp_k<\infty$ hold almost surely. \label{sequence}
\end{lem}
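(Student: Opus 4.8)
\emph{Proof proposal.} Lemma~\ref{sequence} is a variant of the Robbins--Siegmund almost-supermartingale convergence theorem; the only departure from the classical statement is that the conclusion asserts $p_k\to0$ rather than merely a.s.\ convergence of $p_k$, and this strengthening must be drawn from the extra hypothesis $\sum_k q_k=\infty$. I would therefore split the argument into two stages: \textbf{(I)} show that $\{p_k\}$ converges a.s.\ to some finite $p_\infty\ge0$ and that $\sum_k q_kp_k<\infty$ a.s.; \textbf{(II)} combine these with $\sum_k q_k=\infty$ to force $p_\infty=0$.

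For Stage~(I) I would first rescale to absorb the multiplicative factor $1+a_k$. Put $u_0=1$ and $u_k=\prod_{j=0}^{k-1}(1+a_j)^{-1}$; since $a_j\ge0$ and $\sum_j a_j<\infty$ a.s., $\{u_k\}$ is nonincreasing and converges a.s.\ to a strictly positive limit $u_\infty$, and $u_{k+1}$ is $\mathcal F_k$-measurable. Multiplying $\mathbb E[p_{k+1}\mid\mathcal F_k]\le(1+a_k)p_k-q_kp_k+b_k$ by $u_{k+1}=u_k(1+a_k)^{-1}$ gives
\begin{align}
\mathbb E[u_{k+1}p_{k+1}\mid\mathcal F_k]\le u_kp_k-u_{k+1}q_kp_k+u_{k+1}b_k .
\end{align}
Since $u_{j+1}b_j$ is $\mathcal F_j$-measurable and $0\le u_{j+1}b_j\le b_j$, the process
\begin{align}
V_k:=u_kp_k-\sum_{j=0}^{k-1}u_{j+1}b_j
\end{align}
satisfies $\mathbb E[V_{k+1}\mid\mathcal F_k]\le V_k-u_{k+1}q_kp_k\le V_k$, and $V_k\ge-\sum_{j\ge0}b_j>-\infty$ a.s. After a routine localization — stop at $T_M=\inf\{k:\sum_{j<k}a_j+\sum_{j<k}b_j>M\}$, so that on $\{T_M=\infty\}$ the stopped process is bounded below by the constant $-M$ and has the needed integrability, while $\mathbb P(T_M=\infty)\to1$ — the stopped process is a nonnegative supermartingale, hence converges a.s.; telescoping $\mathbb E[V_{(k+1)\wedge T_M}]\le\mathbb E[V_{k\wedge T_M}]-\mathbb E[\mathbf 1_{\{T_M>k\}}u_{k+1}q_kp_k]$ and letting $M\to\infty$ yields $\sum_k u_{k+1}q_kp_k<\infty$ a.s. Because $\sum_{j\ge0}u_{j+1}b_j<\infty$ a.s., the sequence $u_kp_k=V_k+\sum_{j<k}u_{j+1}b_j$ converges a.s., and dividing by $u_k\to u_\infty>0$ shows $p_k$ converges a.s.\ to a finite $p_\infty\ge0$; likewise $u_{k+1}\ge u_\infty>0$ converts $\sum_k u_{k+1}q_kp_k<\infty$ into $\sum_k q_kp_k<\infty$ a.s.

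For Stage~(II), restrict to the a.s.\ event on which $p_k\to p_\infty$ and $\sum_k q_kp_k<\infty$. If $p_\infty>0$ on a subset of positive probability, then on that subset $p_k\ge p_\infty/2$ for all $k\ge K$, hence $\sum_k q_kp_k\ge\tfrac{p_\infty}{2}\sum_{k\ge K}q_k=\infty$ since $\sum_k q_k=\infty$ — a contradiction. Therefore $p_\infty=0$ a.s., i.e.\ $\lim_{k\to\infty}p_k=0$ a.s., which together with $\sum_k q_kp_k<\infty$ a.s.\ is exactly the assertion.

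I expect the only genuinely delicate point to be the supermartingale-convergence step under merely \emph{almost-sure} (rather than uniform) summability of the random sequences $\{a_k\}$ and $\{b_k\}$: without truncation $V_k$ need not be integrable nor bounded below by a deterministic constant, so the supermartingale convergence theorem cannot be invoked directly. The stopping-time truncation by $T_M$ and the passage $M\to\infty$ (using that $\{\sum_k a_k<\infty,\ \sum_k b_k<\infty\}$ equals $\bigcup_M\{T_M=\infty\}$ up to a null set) resolve this; everything else — the $\mathcal F_k$-measurability of $u_{k+1}$ and of the partial sums $\sum_{j<k}u_{j+1}b_j$, and the $L^1$-boundedness of each $p_k$, obtained inductively from the recursion — is routine bookkeeping.
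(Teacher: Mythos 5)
The paper does not actually prove this lemma: it is quoted verbatim from \cite{wang2023tailoring}, so there is no in-paper argument to compare yours against. Your proposal follows the standard Robbins--Siegmund route and is correct in outline, but two observations are worth making. First, your Stage~(I) is precisely the paper's Lemma~\ref{sequence2} (quoted from the same reference): inside this paper the statement is a two-line corollary obtained by applying Lemma~\ref{sequence2} with $u_k=q_kp_k$ to get $p_k\to v\ge 0$ and $\sum_{k}q_kp_k<\infty$ a.s., and then running your Stage~(II) to force $v=0$ via $\sum_k q_k=\infty$; re-deriving the almost-supermartingale theorem from scratch is therefore more work than the context requires, although it is the honest content of the cited result, and your Stage~(II) is exactly the argument one needs on top of it.

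Second, two technical slips in your localization need repair before Stage~(I) is airtight. (a) With $T_M=\inf\{k:\sum_{j<k}(a_j+b_j)>M\}$ the stopped value $V_{T_M}$ is only bounded below by $-(M+b_{T_M-1})$, which is neither a deterministic constant nor integrable, so the stopped process is not yet a supermartingale bounded below as claimed; defining instead $T_M=\inf\{k:\sum_{j\le k}(a_j+b_j)>M\}$ (still a stopping time, since $a_j,b_j$ are $\mathcal{F}_j$-measurable as the lemma intends) gives $V_{k\wedge T_M}\ge -M$ for every $k$ and every outcome. (b) The parenthetical claim that $L^1$-boundedness of each $p_k$ follows ``inductively from the recursion'' is unsupported: no integrability is assumed for $p_0$, $a_k$ or $b_k$ (the conditional expectations are well defined only because everything is nonnegative). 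The fix is to truncate additionally on the $\mathcal{F}_0$-event $\{p_0\le M\}$, whose union over $M$ is almost sure; on the resulting stopped/truncated process one has integrability and the uniform lower bound $-M$, so the supermartingale convergence theorem and your telescoped expectation bound apply, and letting $M\to\infty$ recovers the Stage~(I) conclusions almost surely. With these repairs the proof is complete; Stage~(II) is fine as written.
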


\begin{lem}\cite{wang2023tailoring}
    Let $\{p_k\}$,$\{u_k\}$, $\{a_k\}$ and $\{b_k\}$ be random nonnegative scalar sequences satisfying $ \sum_{k=0}^\infty a_k <\infty$ a.s., $\sum_{k=0}^\infty b_k <\infty$ a.s., and 
    \begin{align}
 \mathbb{E}[p_{k+1}|\mathcal{F}_k]\leq (1+a_k)p_k-u_k+b_k, \forall k \geq 0, a.s.
    \end{align}
    where $\mathcal{F}_k$ is the $\sigma$-algebra generated by $\{p_l,u_l,a_l,b_l,0\leq l\leq k\}$. Then, $\sum_{k=0}^\infty u_k<\infty$ and $\lim_{k \to \infty}p_k =v$ hold almost surely for a random variable $v \geq 0$. \label{sequence2}
\end{lem}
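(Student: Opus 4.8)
\emph{Proof proposal.} The statement is the classical Robbins--Siegmund almost-supermartingale convergence theorem, so the plan is to massage the given recursion into a genuine nonnegative supermartingale and then apply Doob's martingale convergence theorem. \textbf{Step 1 (remove the multiplicative factor).} Set $A_k=\prod_{l=0}^{k-1}(1+a_l)$ with $A_0=1$. Since $a_l\ge 0$ and $\sum_l a_l<\infty$ a.s., the product converges and $1\le A_k\le A_\infty:=\prod_{l=0}^\infty(1+a_l)<\infty$ a.s.; moreover $A_{k+1}$ is $\mathcal{F}_k$-measurable. Dividing the hypothesis by $A_{k+1}$, writing $P_k=p_k/A_k,\ U_k=u_k/A_{k+1},\ B_k=b_k/A_{k+1}$, and using $(1+a_k)/A_{k+1}=1/A_k$, one obtains
\begin{align}
\mathbb{E}[P_{k+1}\mid\mathcal{F}_k]\le P_k-U_k+B_k,\qquad\forall k\ge 0,\ \text{a.s.},
\end{align}
where $P_k,U_k,B_k$ are nonnegative and $\sum_k B_k\le\sum_k b_k<\infty$ a.s. because $A_{k+1}\ge 1$.

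\textbf{Step 2 (build a supermartingale).} Define $Y_k=P_k+\sum_{l=0}^{k-1}(U_l-B_l)$. Using the inequality from Step 1 one checks $\mathbb{E}[Y_{k+1}\mid\mathcal{F}_k]\le Y_k$, so $\{Y_k\}$ is an $\{\mathcal{F}_k\}$-supermartingale, bounded below only by the \emph{random} quantity $-\sum_{l\ge 0}B_l$. This is precisely the hard part: the lower bound is not a deterministic constant, so the convergence theorem does not apply directly. I would resolve it by truncation. Put $D_k=\sum_{l=0}^{k-1}B_l$ and, for $m\in\mathbb{N}$, let $\vartheta_m=\inf\{k\ge 0: D_{k+1}>m\}$, which is an $\{\mathcal{F}_k\}$-stopping time since $D_{k+1}$ is $\mathcal{F}_k$-measurable. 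For $k\le\vartheta_m$ one has $D_k\le m$, hence $Y_{k\wedge\vartheta_m}\ge -m$, so $Y_{k\wedge\vartheta_m}+m$ is a nonnegative supermartingale and converges a.s. to a finite limit. Because $\sum_l B_l<\infty$ a.s. forces $\vartheta_m\uparrow\infty$ a.s. as $m\to\infty$, on the a.s. event $\{\vartheta_m=\infty\}$ (for $m$ large depending on $\omega$) the stopped process coincides with $Y_k$; hence $Y_k$ converges a.s. to a finite limit $Y_\infty$.

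\textbf{Step 3 (read off the conclusions).} The partial sums $S_k:=\sum_{l=0}^{k-1}U_l=Y_k-P_k+D_k\le Y_k+D_k$ are nondecreasing (as $U_l\ge 0$) and a.s. bounded above (since $Y_k$ converges and $D_k\le\sum_l B_l<\infty$), so $S_\infty:=\sum_{l}U_l<\infty$ a.s.; consequently $\sum_k u_k=\sum_k A_{k+1}U_k\le A_\infty S_\infty<\infty$ a.s. Finally $P_k=Y_k-S_k+D_k$ is a combination of a.s.-convergent sequences, so $P_k\to P_\infty\ge 0$ a.s., and therefore $p_k=A_kP_k\to A_\infty P_\infty=:v\ge 0$ a.s. Apart from the truncation device in Step 2, every manipulation is routine.
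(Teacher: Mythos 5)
Your proposal is correct, but note that the paper does not prove this lemma at all: it is imported verbatim from the cited reference (Lemma~\ref{sequence2} is stated with a citation and used as a black box in the proofs of Lemma~\ref{ybarx} and Theorem~1), so there is no in-paper argument to compare against. What you have written is the classical Robbins--Siegmund proof, and it checks out: $A_{k+1}=\prod_{l=0}^{k}(1+a_l)$ is indeed $\mathcal{F}_k$-measurable, so dividing through by it legitimately removes the multiplicative factor; the process $Y_k=P_k+\sum_{l=0}^{k-1}(U_l-B_l)$ is a supermartingale; and you correctly identified and handled the only delicate point, namely that the lower bound $-\sum_l B_l$ is random, via the stopping times $\vartheta_m$ (which are genuine stopping times because $D_{k+1}$ is $\mathcal{F}_k$-measurable, and $\vartheta_m\uparrow\infty$ a.s. because $\sum_l B_l<\infty$ a.s.). Step~3 then recovers both conclusions, including the transfer back from $U_k,P_k$ to $u_k,p_k$ through the a.s.\ finite factor $A_\infty$. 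The only cosmetic caveat is that, since the lemma imposes no integrability on $p_k$, the (conditional) expectations and the nonnegative-supermartingale convergence theorem should be understood in the extended sense for nonnegative random variables (or after an additional truncation on $\{Y_0\le c\}$); this is standard and does not affect the argument. In short: the proof is sound and self-contained, whereas the paper simply defers to the literature for this result.
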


\begin{lem}
    Under Assumptions 1--6, if $\sum_{k=0}^\infty \eta_k=\infty$, $\sum_{k=0}^\infty \eta_k^2 <\infty$, $\sum_{k=0}^\infty \frac{\gamma_k^2}{\eta_k}<\infty$, then for each player $i \in \mathcal{V}$, $\lim_{k\to \infty}\Vert y_{i,k}-\Bar{x}_k\Vert ^2 =0$ holds almost surely. \label{ybarx}
\end{lem}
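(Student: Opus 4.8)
\emph{Proof sketch (plan).}
The idea is to split the tracking error $y_{i,k}-\bar x_k$, with $\bar x_k:=\frac1N\sum_i x_{i,k}$, into a mean-free \emph{disagreement} part and an \emph{averaging} part, and to control each with the stochastic-approximation lemmas above. Put $\mathbf y_k=\mathrm{col}(y_{1,k},\dots,y_{N,k})$, $\bar y_k=\frac1N\sum_i y_{i,k}$, $\mathbf z_k=\mathbf y_k-\mathbf 1_N\otimes\bar y_k$, and $w_k=\bar y_k-\bar x_k$; since these two components are orthogonal, $\|y_{i,k}-\bar x_k\|^2\le 2\|\mathbf z_k\|^2+2\|w_k\|^2$, so it suffices to prove $\|\mathbf z_k\|^2\to0$ and $w_k\to0$ almost surely. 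First I would record the standing bounds: compactness of each $\Omega_i$, boundedness of $\Lambda_i$, and Assumptions~1--2 keep $\nabla_iJ_i(x_{i,k},y_{i,k})$ and $\nabla_i g(y_{i,k})$ bounded along the trajectory (once $\mathbf y_k$ is known to stay bounded), so non-expansiveness of $\Pi_{\Omega_i}$ gives $\|x_{i,k+1}-x_{i,k}\|\le c_1\gamma_k$; Assumption~\ref{compress} gives $\mathbb E[v_{i,k}\mid\mathcal F_k]=y_{i,k}$ and $\mathbb E[\|v_{i,k}-y_{i,k}\|^2\mid\mathcal F_k]\le\sigma^2$, where $\mathcal F_k$ is the $\sigma$-algebra of all iterates and compression noises through the start of iteration $k$ (note that by \eqref{update1a}--\eqref{update1b}, $x_{i,k+1}$ and $\lambda_{i,k+1}$ use $y_{i,k}$, not $v_{i,k}$, hence are $\mathcal F_k$-measurable); and the event rule \eqref{eventtrigger} forces $\|v_{i,k}-\tilde v_{i,k}\|\le\tau_{i,k}$ surely.

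\emph{Averaging part.} Summing \eqref{update2} over $i\in\mathcal V$ and using $A\mathbf 1_N=\mathbf 0_N$ with the symmetry of $A$, the neighbor coupling collapses to $\sum_i a_{ii}(v_{i,k}-\tilde v_{i,k})$, while $\sum_i(x_{i,k+1}-x_{i,k})=N(\bar x_{k+1}-\bar x_k)$; hence $w_{k+1}=w_k+\tfrac{\eta_k}{N}\sum_i a_{ii}(v_{i,k}-\tilde v_{i,k})$ with $w_0=0$ since $\mathbf y_0=\mathbf x_0$. As $\|v_{i,k}-\tilde v_{i,k}\|\le\tau_{i,k}$, the increment is $O(\eta_k\max_i\tau_{i,k})$, and the decaying-threshold conditions (Assumptions~\ref{thresholds}--\ref{weaktao}, with $\{\eta_k\}$ bounded) make $\sum_k\|w_{k+1}-w_k\|<\infty$, so $w_k\to0$ a.s.

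\emph{Disagreement part (the core).} Using $A\mathbf 1_N=\mathbf 0_N$ to rewrite the coupling term of \eqref{update2}, one gets a recursion $\mathbf z_{k+1}=(I_{Nd}+\eta_k(A\otimes I_d))\mathbf z_k+\eta_k(A\otimes I_d)\xi_k+\eta_k R_k+S_k$, where $\xi_k$ stacks the mean-zero compression errors $v_{i,k}-y_{i,k}$ (a martingale difference w.r.t.\ $\mathcal F_k$ with $\mathbb E[\|\xi_k\|^2\mid\mathcal F_k]\le N\sigma^2$), $R_k$ collects the triggering mismatches ($\|R_k\|\le c_2\max_i\tau_{i,k}$), and $S_k$ is the mean-free part of $\mathbf x_{k+1}-\mathbf x_k$ ($\mathcal F_k$-measurable, $\|S_k\|\le c_1\sqrt N\,\gamma_k$). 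Squaring and conditioning on $\mathcal F_k$: the quadratic term contracts by Lemma~\ref{rhoeigenvalue}, namely $\|(I_{Nd}+\eta_k(A\otimes I_d))\mathbf z_k\|\le(1-\eta_k|\rho_2|)\|\mathbf z_k\|$ for $k\ge T$ (valid because $\mathbf z_k$ is mean-free, so subtracting $\tfrac{\mathbf 1_N\mathbf 1_N^{\mathrm T}}{N}\otimes I_d$ is free); the cross term with $\xi_k$ vanishes by conditional unbiasedness; the cross terms with $R_k$ and $S_k$ are absorbed by Young's inequality, each peeling off $\tfrac{\eta_k|\rho_2|}{4}\|\mathbf z_k\|^2$ against terms of order $\eta_k\max_i\tau_{i,k}^2$ and $\gamma_k^2/\eta_k$ respectively; the leftover squared terms are $O(\eta_k^2\sigma^2+\eta_k^2\max_i\tau_{i,k}^2+\gamma_k^2)$. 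This gives, for $k\ge T$, $\mathbb E[\|\mathbf z_{k+1}\|^2\mid\mathcal F_k]\le(1-\tfrac{\eta_k|\rho_2|}{2})\|\mathbf z_k\|^2+b_k$ with deterministic $b_k\ge0$ and $\sum_k b_k<\infty$: here the hypotheses $\sum_k\gamma_k^2/\eta_k<\infty$, $\sum_k\eta_k^2<\infty$, and Assumption~\ref{weaktao} are exactly what makes $\sum_k b_k<\infty$ (using also $\gamma_k^2\le\gamma_k^2/\eta_k$ for large $k$). Since $\sum_k\tfrac{\eta_k|\rho_2|}{2}=\infty$, Lemma~\ref{sequence} yields $\|\mathbf z_k\|^2\to0$ a.s.; combined with $w_k\to0$ and the inequality of the first paragraph, $\|y_{i,k}-\bar x_k\|^2\to0$ a.s.

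\emph{Main obstacle.} The delicate step is putting the recursion into the Robbins--Siegmund form of Lemma~\ref{sequence}: the compressed, event-triggered coupling term must be rewritten so that its genuinely random part is a martingale difference relative to the correct filtration, and then one must verify that after Young's inequality \emph{every} residual perturbation is summable, which is precisely why the coupled step-size/threshold conditions $\sum_k\gamma_k^2/\eta_k<\infty$ and $\sum_k\eta_k\max_i\tau_{i,k}^2<\infty$ are imposed. A secondary point requiring care is the a priori boundedness of $\mathbf y_k$ (needed for $\|x_{i,k+1}-x_{i,k}\|\le c_1\gamma_k$), which can be extracted from the same recursion by a bootstrapping argument or handled as a preceding lemma.
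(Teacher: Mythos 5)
Your treatment of the disagreement part is, in substance, the paper's own proof: the same contraction of $I_N+\eta_k A-\frac{\mathbf{1}_N\mathbf{1}_N^{\mathrm T}}{N}$ via Lemma~\ref{rhoeigenvalue}, the same per-step bound $\Vert x_{i,k+1}-x_{i,k}\Vert\le\gamma_k(l_J+l_g\bar{\Lambda}_i)$, event errors bounded surely by $\tau_{i,k}$, conditional unbiasedness of the compressor to remove the stochastic cross term, Young's inequality against the $\eta_k|\rho_2|$ budget, and finally Lemma~\ref{sequence} with $q_k=\eta_k|\rho_2|$ and $b_k$ summable thanks to $\sum_k\gamma_k^2/\eta_k<\infty$, $\sum_k\eta_k^2<\infty$ and Assumption~\ref{weaktao}. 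One point you flag as an obstacle is not one: no a priori boundedness of $\mathbf{y}_k$ (and no bootstrapping lemma) is needed, because Assumption~1 is a \emph{global} Lipschitz condition on $J_i$ and $g$, so $\Vert\nabla_i J_i\Vert\le l_J$ and $\Vert\nabla_i g\Vert\le l_g$ everywhere, and $\lambda_{i,k}\in[0,\bar{\Lambda}_i]$ by the projection onto $\Lambda_i$; this is exactly how the paper obtains its bound on $\Vert\mathbf{x}_{k+1}-\mathbf{x}_k\Vert$.

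The genuine gap is in your averaging part. Your summation of \eqref{update2} is correct (and in fact more careful than the paper, which simply asserts $\bar y_{k+1}=\bar y_k+\bar x_{k+1}-\bar x_k$, i.e.\ $w_k\equiv 0$; that identity is exact only when every player triggers, since the event error enters one-sidedly through $\tilde v_{j,k}$): the residual drift is $w_{k+1}-w_k=\frac{\eta_k}{N}\sum_i \deg(i)(\tilde v_{i,k}-v_{i,k})$. But your conclusion $w_k\to 0$ does not follow. First, $\sum_k\eta_k\max_i\tau_{i,k}<\infty$ is not implied by the stated hypotheses: Assumption~\ref{weaktao} controls $\sum_k\eta_k\max_i\tau_{i,k}^2$, and, e.g., $\eta_k=k^{-0.6}$, $\tau_{i,k}=k^{-1/4}$ satisfies Assumptions~\ref{thresholds}--\ref{weaktao} and all step-size conditions while $\sum_k\eta_k\tau_{i,k}=\sum_k k^{-0.85}=\infty$. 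Second, even when the increments are absolutely summable (e.g.\ the geometric thresholds of Remark~2), summability only yields that $w_k$ converges to the accumulated drift $\sum_{k\ge0}\frac{\eta_k}{N}\sum_i\deg(i)(\tilde v_{i,k}-v_{i,k})$, which has no reason to be zero: the $w$-recursion contains no contraction pulling $w_k$ back to the origin, and $w_0=0$ does not help. As written, your decomposition therefore only gives $y_{i,k}-\bar x_k\to w_\infty$ for some limit $w_\infty$, not the claimed $\lim_k\Vert y_{i,k}-\bar x_k\Vert^2=0$; to close the lemma along your route you would have to show that the average of the event-triggered coupling term vanishes (which is what the paper's proof implicitly relies on when it sets $\bar y_k=\bar x_k$), or otherwise argue the accumulated drift is zero.
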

\begin{proof}
The proof is provided in Appendix A.
\end{proof}

Based on the preliminary results above, we can show that the primal--dual iteration pairs converge to GNE of Problem~1. 

\begin{theorem}
    Under Assumptions 1--\ref{weaktao}, with the step sizes satisfying $\sum_{k=0}^\infty \eta_k=\infty$, $\sum_{k=0}^\infty \eta_k^2 <\infty$, $\sum_{k=0}^\infty \frac{\gamma_k^2}{\eta_k}<\infty$, the sequence $\{(\mathbf{x}_k,\mathbf{\lambda}_k)\}$ generated by Algorithm 1 converges to the fixed point $(\mathbf{x}^*, \mathbf{\lambda}^*)$ almost surely, namely,  $\lim_{k \to \infty}\Vert x_{i,k}-x_i^*\Vert ^2=0$ and $\lim_{k \to \infty}\Vert \lambda_{i,k}-\lambda_i^*\Vert ^2=0$ a. s. for all $i \in \mathcal{V}$.
\end{theorem}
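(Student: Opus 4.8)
The plan is to set up a Lyapunov-type argument built around the candidate function $V_k = \sum_{i\in\mathcal V}\bigl(\|x_{i,k}-x_i^*\|^2+\|\lambda_{i,k}-\lambda_i^*\|^2\bigr)$ and to drive it to zero by invoking the almost-sure supermartingale convergence result of Lemma~\ref{sequence}. First I would use nonexpansiveness of the projection operators $\Pi_{\Omega_i}$ and $\Pi_{\Lambda_i}$ to bound $\|x_{i,k+1}-x_i^*\|^2$ and $\|\lambda_{i,k+1}-\lambda_i^*\|^2$ in terms of the previous iterates; since $(\mathbf x^*,\mathbf\lambda^*)$ is a fixed point of~(4) and hence satisfies the projected equations with argument $\bar x^*$, the cross terms collapse into inner products of the form $\langle x_{i,k}-x_i^*,\ \nabla_i J_i(x_{i,k},y_{i,k})+\lambda_{i,k}\nabla_i g(y_{i,k}) - (\nabla_i J_i(x_i^*,\bar x^*)+\lambda_i^*\nabla_i g(\bar x^*))\rangle$ and the analogous dual term. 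The key structural fact to exploit is monotonicity of the pseudo-gradient / KKT map associated with Problem~1 (which follows from convexity of each $J_i$ in $x_i$ together with Assumption~2 and the standard argument cited after Assumption~2 guaranteeing uniqueness of the equilibrium); this renders the "exact" part of the cross term nonpositive, up to the term $q_k V_k$ that Lemma~\ref{sequence} absorbs.

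Second, I would treat the discrepancy between the true aggregate $\bar x_k$ and each local estimate $y_{i,k}$ as a perturbation: replacing $y_{i,k}$ by $\bar x_k$ in every gradient evaluation introduces error terms controlled, via the Lipschitz Assumptions~1 and~2 and the boundedness of $\Omega_i$, $\Lambda_i$, $g$, by a constant multiple of $\|y_{i,k}-\bar x_k\|$. Using Young's inequality these are split into a piece that can be folded into $-q_kV_k$ (after multiplying by $\gamma_k$) and a residual of order $\gamma_k\|y_{i,k}-\bar x_k\|^2 + \gamma_k^2$. Here Lemma~\ref{ybarx} does most of the heavy lifting: it already gives $\|y_{i,k}-\bar x_k\|^2\to 0$ a.s., and a closer reading of its proof (Appendix~A) should in fact yield summability of $\sum_k \gamma_k\|y_{i,k}-\bar x_k\|^2$ or at least $\sum_k \eta_k\|y_{i,k}-\bar x_k\|^2<\infty$, which together with $\sum_k \gamma_k^2/\eta_k<\infty$ and Cauchy--Schwarz handles the cross terms; the bare quadratic error terms $\gamma_k^2$ are summable by assumption. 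Assembling these estimates produces the inequality $\mathbb E[V_{k+1}\mid\mathcal F_k]\le(1+a_k-q_k)V_k+b_k$ with $\sum a_k<\infty$, $\sum b_k<\infty$ a.s., and $\sum q_k=\infty$ (the latter because $q_k$ is proportional to $\gamma_k$ times a strong-monotonicity-type modulus, and one would need $\sum\gamma_k=\infty$; if only $\sum\gamma_k^2/\eta_k<\infty$ and $\sum\eta_k^2<\infty$ are assumed, one argues instead along a subsequence and then upgrades using Lemma~\ref{sequence2} to conclude $V_k$ converges, combined with the already-established $\sum q_kV_k<\infty$ to force the limit to be zero).

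The main obstacle I anticipate is exactly this last bookkeeping step: closing the loop so that the coefficient $q_k$ multiplying $V_k$ is genuinely negative (not merely nonpositive) infinitely often and sums to infinity, since the monotonicity of the KKT map need not be \emph{strict} in $\lambda$ without a constraint-qualification/regularity condition, and the step sizes are only required to satisfy $\sum\gamma_k^2/\eta_k<\infty$ rather than $\sum\gamma_k=\infty$. I expect the resolution to proceed in two stages: first apply Lemma~\ref{sequence2} (with $u_k$ collecting the nonnegative dissipation terms) to deduce that $V_k$ converges a.s.\ to some $v\ge 0$ and that $\sum_k u_k<\infty$; then show $u_k$ contains a term bounded below by a positive multiple of $\gamma_k\cdot(\text{distance of }(\mathbf x_k,\mathbf\lambda_k)\text{ to the solution set})$, whose summability together with $\sum\gamma_k=\infty$ (which, re-examining the hypotheses, must be implied or additionally assumed — note $\eta_k\to 0$ with $\sum\eta_k=\infty$ and $\sum\gamma_k^2/\eta_k<\infty$ is the regime) forces the liminf of that distance to be zero; continuity then identifies $v=0$. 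A secondary technical point is verifying that the compression noise and event-triggering error, which enter only through $y_{i,k}$, have already been fully accounted for inside Lemma~\ref{ybarx}, so that the present argument may treat $\|y_{i,k}-\bar x_k\|$ as an exogenous, almost-surely vanishing (and suitably summable when weighted) sequence.
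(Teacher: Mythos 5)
Your proposal follows essentially the same route as the paper's proof: nonexpansive projections plus the per-player Lagrangian saddle-point gap (your ``monotone KKT map''), the consensus error $\|y_{i,k}-\bar x_k\|$ treated as a perturbation and absorbed using Lemma~5 together with the weighted summability $\sum_k \eta_k\|y_{i,k}-\bar x_k\|^2<\infty$ extracted from its proof, then Lemma~4 to obtain almost-sure convergence of $V_k$ and summability of $\gamma_k$ times the gap, a subsequence argument from non-summability of $\gamma_k$, and continuity/uniqueness to identify the limit as zero. Your side remark is also on target: the paper's proof likewise invokes that ``$\gamma_k$ is non-summable,'' which does not follow from the stated conditions $\sum_k\eta_k=\infty$, $\sum_k\eta_k^2<\infty$, $\sum_k\gamma_k^2/\eta_k<\infty$ and is effectively an additional implicit hypothesis, satisfied in the $\gamma_k=1/k^t$, $t\le 1$ regime used later.
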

\begin{proof}
The proof is provided in Appendix B.
\end{proof}

\subsection{Differential Privacy Analysis}
This section presents a rigorous analysis of differential privacy. We introduce the following definitions pertinent to differential privacy in the context of distributed aggregative games.

\begin{definition}\cite{9416892}
 (Adjacent Function Sets): Two function sets $\mathcal{F} ^{(1)} =\{f^{(1)}\}^N_{i=1}$ and $\mathcal{F}^{(2)} = \{f^{(2)}\}^N_{i=1}$ are said to be adjacent if there exists some $i_0 \in \mathcal{V}$ such that $f^{(1)} = f^{(2)}, \forall i \neq i_0$, and $f^{(1)}_i \neq f^{(2)}_i$ for $i=i_0$.
\end{definition}

\begin{definition}\cite{meiser2018approximate}
($(\epsilon, \delta)$-Differentially Privacy): Let $\mathcal{R}$ be a distributed GNE seeking strategy. The strategy $\mathcal{R}$ preserves $(\epsilon, \delta)$-differential privacy if for any pair of adjacent objective function sets $\mathcal{F}^{(1)}$, $\mathcal{F}^{(2)}$ and any observation $\mathcal{O} \subset Range(\mathcal{R})$, the following inequality holds
\begin{align}
 \mathbb{P}\{\mathcal{R}(\mathcal{F}^{(1)}) \in \mathcal{O}\} \leq e^\epsilon \mathbb{P}\{\mathcal{R}(\mathcal{F}^{(2)}) \in \mathcal{O}\}+\delta, 
\end{align}
where $\epsilon \geq 0,~0\leq\delta\leq 1$ and $Range(\mathcal{R})$ is the domain of the observation under $\mathcal{R}$. \label{dp}
\end{definition}

To achieve privacy protection, we introduce a compressor that satisfies Assumption \ref{compress}:

\begin{definition}\cite{10531790}
The element-wise stochastic compressor quantizes a vector $x = \mathrm{col}(x_{(1)}, x_{(2)},..., x_{(d)}) \in \mathbb{R}^d$ as $\mathcal{C}(x) =
[\mathcal{C}(x_{(1)}),\mathcal{C}(x_{(2)}),...,\mathcal{C}(x_{(d)})]^\mathrm{T}$ to the range by a scale factor $\theta \in \mathbb{N}^+$ and $b$ bits, where $2^b\theta>|x_{(i)}|,  i = 1,2,...,d$. For any $l_c\theta \leq x_{(i)} < (l_c + 1)\theta$, the compressor outputs
\begin{align}
\mathcal{C}(x_{(i)})=\left\{
\begin{aligned}
&l_c\theta,~\text{with probability}~~ 1+l_c-x_{(i)}/\theta, \\
&(l_c+1)\theta,~\text{with probability}~~ x_{(i)}/\theta-l_c.\\
\end{aligned}
\right.
\end{align}
\end{definition}
For adjacent function sets $\mathcal{F}^{(1)}$ and $\mathcal{F}^{(2)}$, the eavesdropper is presumed to know the initial states of the algorithm. Thus, $\mathbf{x}_0^{(1)}=\mathbf{x}_0^{(2)}$ and $\mathbf{y}_0^{(1)} = \mathbf{y}_0^{(2)}$ based on the same observation. We indicate that $y_{i_0}^{(1)} \neq y_{i_0}^{(2)} $ for $i=i_0$ and $y_{i}^{(1)} = y_{i}^{(2)} $ for $i\neq i_0$. In the rest, we only have to consider the situation when $i=i_0$. We prove Algorithm~1 can achieve $(0,\delta)$-differential privacy with the compressor from Definition~4. 
\begin{theorem}
     Under Assumptions 1--6 and the conditions of Theorem 1, assume $\Delta y_{i_0,k}= y_{i_0,k}^{(1)}-y_{i_0,k}^{(2)} \leq 2\theta$, and let $\eta_k =\frac{1}{k^s}$ and $\gamma_k=\frac{1}{k^t}$ with  $0.5<s \leq t\leq 1$ and $2t-s>1$, then Algorithm~1 ensures $(0,\delta_k)$-differential privacy where
    \begin{align}
  \delta_k= \min\{ 1, \frac{2t\ln(k)\sqrt{N}}{\theta}(l_J+l_g \max \{\Bar{\Lambda}_i\}) \}. \label{deltak}
    \end{align}
\end{theorem}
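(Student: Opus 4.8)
The plan is to track the effect of a single player's change of cost function on the distribution of the transmitted messages, and to show that although the likelihood ratio between the two adjacent executions need not be bounded (which is why we aim for $(0,\delta)$ rather than $(\epsilon,0)$), the total variation gap between the two output distributions can be bounded by $\delta_k$. First I would observe that, since the eavesdropper knows the initialization, the observed quantities are the compressed/triggered messages $\{\Tilde v_{i,k}\}$, and by the adjacency hypothesis the two executions coincide for all $i\neq i_0$ and for $k=0$; so it suffices to compare the conditional laws of $\mathcal{C}(y_{i_0,k}^{(1)})$ and $\mathcal{C}(y_{i_0,k}^{(2)})$ given a common past. Using Definition 4, for a scalar coordinate lying in a cell $[l_c\theta,(l_c+1)\theta)$ the stochastic quantizer produces a two-point distribution whose parameter is the fractional part $y/\theta - l_c$; the total variation distance between the two induced laws on that coordinate is at most $|\Delta y_{i_0,k}|/\theta$ when $\Delta y_{i_0,k}\le 2\theta$ keeps the two values within neighbouring cells. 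Stacking $d$ coordinates and using independence across coordinates, the per-iteration TV gap is $O(|\Delta y_{i_0,k}|/\theta)$; summing (union bound) over the coordinates/players that the $i_0$-perturbation can reach gives the $\sqrt N$ factor in \eqref{deltak}.

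Next I would bound $|\Delta y_{i_0,k}|$. Subtracting the $y$-update \eqref{update2} for the two executions, the only source of discrepancy is the difference in the primal/dual increments $x_{i_0,k+1}-x_{i_0,k}$, which through \eqref{update1a}--\eqref{update1b} is driven by $\gamma_k(\nabla_{i_0}J_{i_0}^{(1)} - \nabla_{i_0}J_{i_0}^{(2)})$ and $\gamma_k(g^{(1)}-g^{(2)})\lambda_{i_0,k}$; by Assumption 1 these are bounded by $\gamma_k\, l_J$ and $\gamma_k\, l_g\max\{\Bar\Lambda_i\}$ respectively, using the boundedness of $\lambda_{i_0,k}$ on $\Lambda_{i_0}$. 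The projection is nonexpansive, so $|\Delta x_{i_0,k+1}-\Delta x_{i_0,k}|$ contributes at most $\gamma_k(l_J+l_g\max\{\Bar\Lambda_i\})$ per step. With $\gamma_k=1/k^t$, the accumulated drift after $k$ steps is $\sum_{j=1}^{k}\gamma_j \le 1 + \int_1^k t^{-t}\,dt$, which for $t\le 1$ is $O(\ln k)$ (the borderline $t=1$ giving exactly $\ln k$, and $t<1$ a $k^{1-t}$ term that the statement subsumes into the $\ln$-type bound via the range condition); this produces the $2t\ln(k)$ factor. The conditions $0.5<s\le t\le 1$ and $2t-s>1$ are exactly the hypotheses of Theorem 1 (with $\eta_k=k^{-s},\gamma_k=k^{-t}$, one checks $\sum\eta_k=\infty$, $\sum\eta_k^2<\infty$, $\sum\gamma_k^2/\eta_k=\sum k^{s-2t}<\infty$), so convergence and the boundedness used above are in force.

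Combining the two bounds, the per-coordinate TV gap is at most $\frac{2t\ln(k)}{\theta}(l_J+l_g\max\{\Bar\Lambda_i\})$, and aggregating over the $d$ coordinates and the players affected yields the $\sqrt N$ factor, giving $\delta_k=\min\{1,\frac{2t\ln(k)\sqrt N}{\theta}(l_J+l_g\max\{\Bar\Lambda_i\})\}$; the $\min$ with $1$ is automatic since $\delta$-parameters are clipped to $[0,1]$. Finally, a standard coupling/TV argument shows that two output distributions at total variation distance $\delta$ satisfy $\mathbb{P}\{\mathcal{R}(\mathcal{F}^{(1)})\in\mathcal{O}\}\le \mathbb{P}\{\mathcal{R}(\mathcal{F}^{(2)})\in\mathcal{O}\}+\delta$ for every measurable $\mathcal{O}$, which is precisely $(0,\delta_k)$-differential privacy in the sense of Definition \ref{dp} with $\epsilon=0$. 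The main obstacle I anticipate is the bookkeeping in the second step: propagating $\Delta y_{i_0,k}$ through the coupled $x$-, $\lambda$- and $y$-recursions while keeping the bound free of the consensus error (so that only the $\gamma_k$-scaled gradient/constraint mismatch accumulates), and verifying that the event-triggered mechanism does not amplify the discrepancy — since the trigger depends on the \emph{compressed} value, one must argue that conditioning on the triggering pattern still leaves the two quantizer outputs close in TV, which is where the cell-adjacency assumption $\Delta y_{i_0,k}\le 2\theta$ does the heavy lifting.
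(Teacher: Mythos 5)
Your proposal follows essentially the same route as the paper's own proof: bound the per-step discrepancy $\Delta x_{i_0,k+1}-\Delta x_{i_0,k}$ by the $\gamma_k$-scaled gradient/constraint bounds $2\gamma_k(l_J+l_g\max\{\Bar{\Lambda}_i\})$, accumulate it into $\Vert\Delta y_{i_0,k}\Vert \leq 2(l_J+l_g\max\{\Bar{\Lambda}_i\})\,t\ln(k)$, and then use the cell-adjacency assumption $\Delta y_{i_0,k}\leq 2\theta$ to bound the stochastic quantizer's output-probability gap by $|\Delta y_{i_0,k}|/\theta$, which yields exactly the stated $\delta_k$ with the $\sqrt{N}$ aggregation. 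The loose point you flag yourself --- that $\sum_{j\leq k}j^{-t}$ is of order $k^{1-t}$ rather than $\ln k$ when $t<1$ --- is present in the paper's proof as well, which states the $t\ln(k)$ bound without further justification, so your treatment is no weaker than the original.
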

\begin{proof}
The proof is provided in Appendix C.
\end{proof}
\section{Simulation}
This section presents numerical simulations to validate the theoretical results. We consider a system of $N$ electricity users, each with specific energy consumption demands, communicating on a randomly generated ring graph \cite{liang2017distributed,grammatico2017dynamic}.  For user $i$, $x_i \in [\underline{r}_i,\overline{r}_i]$ is the energy consumption and $J_i(x_i, \Bar{x})$ is the cost function established by
\begin{align}
J_i(x_i, \Bar{x}) = (x_i - s_i)^2 + p_0 (N*\Bar{x}+p_1)x_i, 
\end{align}
where $s_i$ is  the nominal value of energy consumption and $p_0 (N*\Bar{x}+p_1)x_i$ represents the price. We choose $N=5$,   $s_1=56,s_2=60,s_3=42,s_4=57,s_5=54$, $p_0=0.05$, $p_1=9$ and $[\underline{r}_i,\overline{r}_i]=[30,50]$ and further impose the linear constraint $\sum_{i=1}^N x_i \leq 200$. In this case, the GNE is at
$\mathbf{x}^*=[42.5137,~45.1596,~30,~44.4407,~37.8858]^\mathrm{T}$ through a centralized calculation.

To assess the efficacy of Algorithm 1 we set $\gamma_k=\frac{1}{k^{0.9}}$ and $\eta=\frac{1}{k^{0.7}}$ with the threshold $\tau_{i,k} = 20\times0.8^k$ and $\mathbf{x}_0$ is randomly chosen from $[30, 50]^N$. Due to the randomness of the compressor, we performed simulation runs $100$ to derive the empirical mean of the residual $\frac{1}{T} \sum_1^T\Vert \mathbf{x}_k-\mathbf{x}^*\Vert ^2$ with $T=100$. 

\begin{table}[h]\scriptsize
    \centering
    \caption{Different compression parameters}
\begin{tabular}{cccc}
\toprule

\small Compressors & \small $\theta$ & \small bits &$\small \sigma$     \\
\midrule
\small Compressor 1 & \small 5 &\small 4 &  \small  2.5  \\
\small Compressor 2 & \small 10 &\small 3 & \small  5   \\
\small Compressor 3 & \small 15 &\small 2 &  \small 7.5  \\
\bottomrule
\end{tabular}
\end{table}

\begin{figure}
\begin{center}
\includegraphics[width=3.3in,height=2.5in]{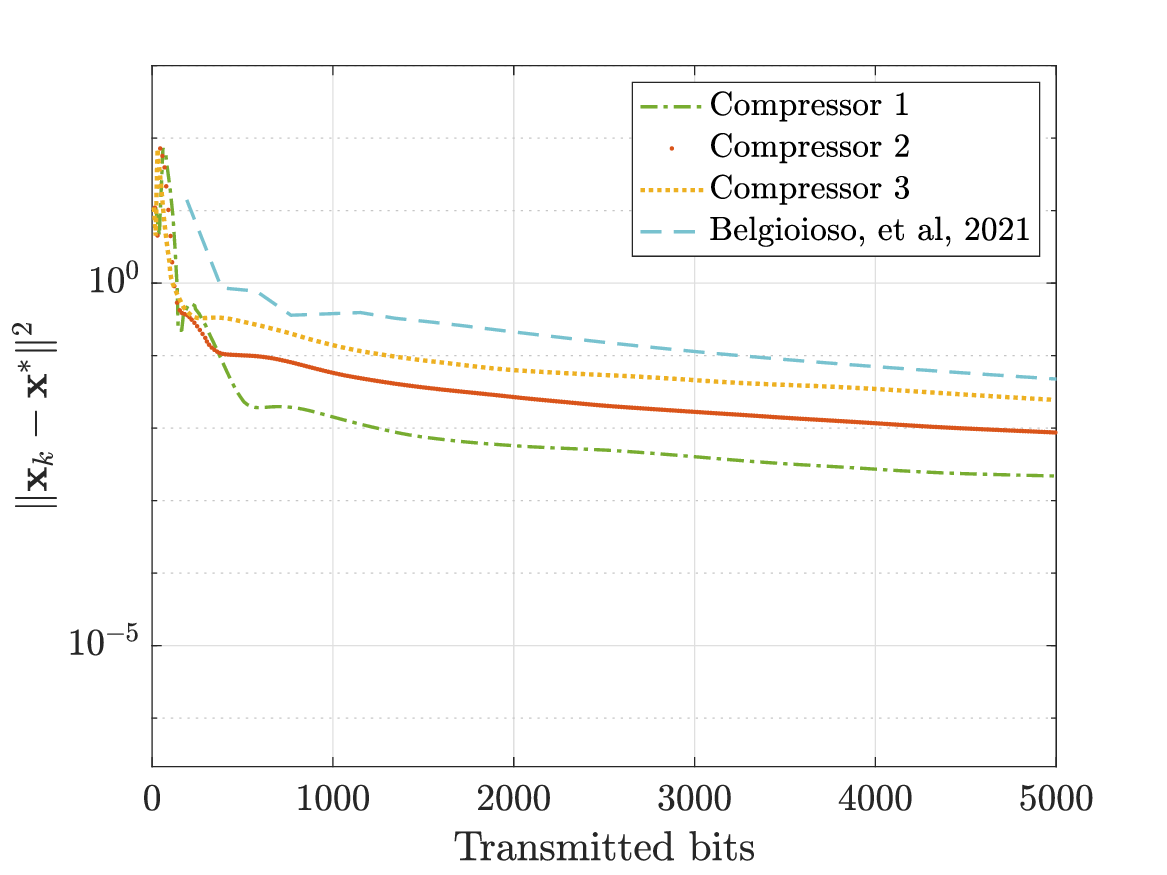}
\caption{Comparison between the proposed Algorithm 1 and the existing GNE approach in \cite{9130079} with various parameter combinations under different parameters in Table I.}
\end{center}
\end{figure}

\begin{figure}
\begin{center}
\includegraphics[width=3.3in,height=2.5in]{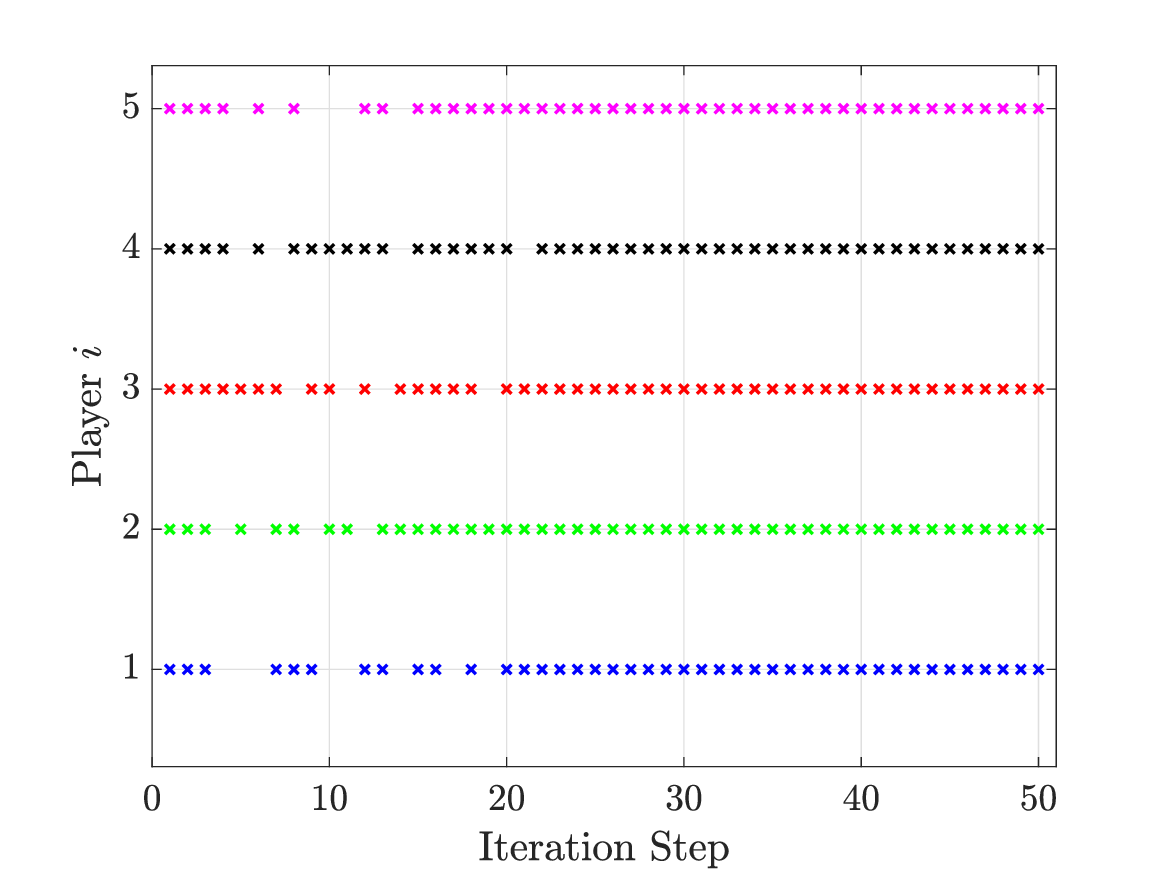}
\caption{Triggering instants for Algorithm~1 with Compressor~3.}
\end{center}
\end{figure}

For comparison, we set different parameters for compressor $\mathcal{C}$, as detailed in Table I. Fig. 1 demonstrates the convergence performance comparison of the proposed Algorithm 1 and the existing GNE-seeking method from \cite{9130079} under different parametric conditions.  It's shown that $\mathbf{x}_k$ converges to the equilibrium point $\mathbf{x}^*$ under Algorithm 1 with different compressors and the Compressor 1 has superior convergence performance while requires fewer bits achieving GNE. The triggering instants of each player are shown in Fig. 2.

\section{Conclusion}
In this paper, we investigated distributed GNE seeking in aggregative games with coupling constraints. We introduced a novel efficient distributed GNE seeking algorithm to save communication costs, which is realized by incorporating an event-trigger mechanism and the stochastic compressor. By developing precise step size conditions, we have shown that the proposed algorithm ensures convergence to an exact NE. In particular, communication rounds are saved by the event-trigger mechanism, and transmitted bits are reduced by stochastic compressors. Privacy-preserving can be further achieved with a stochastic compression scheme with accurate convergence. Numerical simulations validate the efficacy of the algorithm. Future works may include distributed GNE seeking with nonconvex constraints and multiple GNE selecting.

\section{Appendix}
\subsection{Proof of Lemma 5}
Let $e_{i,k}=\Tilde{v}_{i,k}-v_{i,k}$ be the event error and $r_{i,k}=\mathcal{C}(y_{i,k})-y_{i,k}$ be the compress error. Define $A_k=(I_N+\eta_kA-\frac{\mathbf{1}_N\mathbf{1}_N^\mathrm{T}}{N})\otimes I_d$, $R_k=\mathrm{col}(r_{1,k}, r_{2,k},...,r_{N,k})$ and $E_k=[E_{1,k}, ..., E_{N,k}]^\mathrm{T}$, where $E_{i,k}=\sum_{j \in  \mathcal{N}_i}^N a_{ij}e_{j,k}$. 

According to the update rule (\ref{update2}), there is
\begin{align}
y_{i,k+1}=&~y_{i,k}+\eta_k\sum_{j\in \mathcal{N}_i}^N a_{ij}(\Tilde{v}_{j,k}-v_{i,k})+x_{i,k+1}-x_{i,k}\nonumber\\
    =&~y_{i,k}+\eta_k\sum_{j\in \mathcal{N}_i}^N a_{ij}(y_{j,k}-y_{i,k})+\eta_k\sum_{j\in \mathcal{N}_i}^N a_{ij}e_{j,k}\nonumber\\
    &+\eta_k\sum_{j\in \mathcal{N}_i}^N a_{ij}(r_{j,k}-r_{i,k})+x_{i,k+1}-x_{i,k}.\nonumber
\end{align}

Since $\Bar{y}_k=\frac{\mathbf{1}_N^\mathrm{T}\otimes I_d}{N}\mathbf{y}_k$  and $\Bar{y}_{k+1} = \Bar{y}_k + \Bar{x}_{k+1} - \Bar{x}_k$ holds, then $\Bar{y}_k-\Bar{x}_k=\Bar{y}_{k-1} -\Bar{x}_{k-1}=...= \Bar{y}_0 -\Bar{x}_0$. As long as we initialize $\mathbf{y}_0=\mathbf{x}_0$, it evolves that $\Bar{y}_k=\Bar{x}_k$. Given $\sum_{i=1}^N\Vert y_{i,k+1}-\Bar{x}_k\Vert ^2=\Vert \mathbf{y}_{k+1}-\mathbf{1}_N\otimes\Bar{x}_k\Vert ^2$, where $\mathbf{y}_{k}=\mathrm{col}(y_{1,k}, ...,y_{N,k})$, $\mathbf{x}_{k}=\mathrm{col}(x_{1,k},..., x_{N,k})$, $\Bar{x}_k=\frac{\mathbf{1}_N^\mathrm{T}\otimes I_d}{N} \mathbf{x}_k$, we obtain
\begin{align}
    \Vert \mathbf{y}_{k+1}-&\mathbf{1}_N\otimes\Bar{x}_k\Vert ^2\nonumber\\
    \leq &~\Vert ((I_N+\eta_kA)\otimes I_d)\mathbf{y}_k+\eta_k E_k \nonumber\\
    &+(\eta_kA\otimes I_d)R_k+\mathbf{x}_{k+1}-\mathbf{x}_k-\mathbf{1}_N\otimes \Bar{x}_k\Vert ^2\nonumber\\
    \leq &~\Vert A_k\mathbf{y}_k+\eta_kE_k+(\eta_kA \otimes I_d)R_k+\mathbf{x}_{k+1}-\mathbf{x}_k\Vert ^2\nonumber.
\end{align}

As long as $A_k(\mathbf{1}_N \otimes \Bar{x}_k)=\mathbf{0}_{Nd}$ holds, there is $A_k\mathbf{y}_k=A_k(\mathbf{y}_k-\mathbf{1}_N\otimes\Bar{x}_k)$. Thus
\begin{align}
    \Vert &\mathbf{y}_{k+1}-\mathbf{1}_N\otimes\Bar{x}_k\Vert ^2\nonumber\\
    \leq &~\Vert A_k(\mathbf{y}_k-\mathbf{1}_N\otimes\Bar{x}_k)+\mathbf{x}_{k+1}-\mathbf{x}_k+\eta_kE_k\Vert ^2 \nonumber\\
    &+\Vert (\eta_k A\otimes I_d)R_k\Vert ^2+2\langle A_k(\mathbf{y}_k-\mathbf{1}_N\otimes\Bar{x}_k)+\mathbf{x}_{k+1}\nonumber\\
    &-\mathbf{x}_k+\eta_kE_k,(\eta_k A\otimes I_d)R_k\rangle. \label{align1}
\end{align}

Applying the Cauchy–Schwarz inequality, we have
\begin{align}
    &\Vert A_k(\mathbf{y}_k-\mathbf{1}_N\otimes\Bar{x}_k)+\mathbf{x}_{k+1}-\mathbf{x}_k+\eta_kE_k\Vert ^2\nonumber\\
    &\leq  (1+\varepsilon)\Vert  A_k(\mathbf{y}_k-\mathbf{1}_N\otimes\Bar{x}_k)\Vert ^2  \nonumber\\
    &~~+(1+\varepsilon^{-1})\Vert \mathbf{x}_{k+1}-\mathbf{x}_k+\eta_kE_k\Vert ^2. \nonumber
\end{align}

Using Lemma \ref{rhoeigenvalue}, $\exists ~T\geq 0, k \geq T$ such that 
\begin{align}
    \Vert  A_k(\mathbf{y}_k-\mathbf{1}_N\otimes\Bar{x}_k)\Vert ^2 &\leq [(1-\eta_k |\rho_2|)\Vert \mathbf{y}_k-\mathbf{1}_N\otimes\Bar{x}_k\Vert ]^2\nonumber\\
    &\leq (1-\eta_k |\rho_2|)^2  \Vert \mathbf{y}_k-\mathbf{1}_N\otimes\Bar{x}_k\Vert ^2. \nonumber
\end{align}

Given that $\varepsilon=\frac{\eta_k |\rho_2|}{1-\eta_k|\rho_2|}$, it holds that 
\begin{align}
    \Vert A_k(\mathbf{y}_k&-\mathbf{1}_N\otimes\Bar{x}_k)+\mathbf{x}_{k+1}-\mathbf{x}_k+\eta_kE_k\Vert ^2\nonumber\\
    &\leq (1-\eta_k |\rho_2|) \Vert \mathbf{y}_k-\mathbf{1}_N\otimes\Bar{x}_k\Vert ^2\nonumber\\
    &~~+\frac{1}{\eta_k|\rho_2|} \Vert \mathbf{x}_{k+1}-\mathbf{x}_k
    +\eta_kE_k\Vert ^2 . \label{align2}
\end{align}

For $\Vert \mathbf{x}_{k+1}-\mathbf{x}_k\Vert ^2$, according to Assumption 1 and the update rule (\ref{update1a}), there has 
\begin{align}
    \Vert \mathbf{x}_{k+1}&-\mathbf{x}_k\Vert ^2 \nonumber\\
    &=\sum_{i=1}^N \Vert x_{i,k+1}-x_{i,k}\Vert ^2\nonumber\\
     &=\sum_{i=1}^N \Vert -\gamma_k(\nabla_i J_i(x_{i,k},y_{i,k})+\nabla_ig(y_{i,k})\lambda_{i,k}) \Vert ^2\nonumber\\
     &\leq  N\gamma_k^2 (l_J+l_g\max_i\{ \Bar{\Lambda}_i\})^2. \label{align3}
\end{align}

Combine (\ref{align1}), (\ref{align2}) and (\ref{align3}), we can obtain that
\begin{align}
  &\Vert \mathbf{y}_{k+1}-\mathbf{1}_N\otimes\Bar{x}_k\Vert ^2\nonumber\\
  &\leq  ~(1-\eta_k |\rho_2|) \Vert \mathbf{y}_k-\mathbf{1}_N\otimes\Bar{x}_k\Vert ^2\nonumber\\
  &+\frac{1}{\eta_k|\rho_2|} \Vert \mathbf{x}_{k+1}-\mathbf{x}_k+\eta_kE_k\Vert ^2 +\Vert (\eta_k A\otimes I_d)R_k\Vert ^2\nonumber\\
  &+2\left \langle A_k(\mathbf{y}_k-\mathbf{1}_N\otimes\Bar{x}_k)+\mathbf{x}_{k+1}-\mathbf{x}_k+\eta_kE_k,(\eta_k A\otimes I_d)R_k\right \rangle\nonumber\\
  &\leq (1-\eta_k |\rho_2|) \Vert\mathbf{y}_k-\mathbf{1}_N\otimes\Bar{x}_k\Vert ^2+\frac{\gamma_k^2 N}{\eta_k|\rho_2|}(l_J+l_g\max_i\{ \Bar{\Lambda}_i\})^2\nonumber\\
  &+\frac{\eta_k}{|\rho_2|}E_k^2 + \frac{2}{\eta_k|\rho_2|}\langle \mathbf{x}_{k+1}-\mathbf{x}_k,\eta_kE_k\rangle +\Vert (\eta_k A\otimes I_d)R_k\Vert ^2\nonumber\\
  &+2\left \langle A_k(\mathbf{y}_k-\mathbf{1}_N\otimes\Bar{x}_k)+\mathbf{x}_{k+1}-\mathbf{x}_k+\eta_kE_k,(\eta_k A\otimes I_d)R_k\right \rangle. \nonumber
\end{align}

By Algorithm 1, if $\Vert v_{i,k+1}-\Tilde{v}_{i,k}\Vert \geq \tau_{i,k+1}$, then $\Tilde{v}_{i,k+1}=v_{i,k+1}$. Then we get $\Vert v_{i,k+1}-\Tilde{v}_{i,k+1}\Vert =0 < \tau_{i,k+1}$. Else when $\Vert v_{i,k+1}-\Tilde{v}_{i,k}\Vert < \tau_{i,k+1}$, there is $\Tilde{v}_{i,k+1}=\Tilde{v}_{i,k}$. We can also get $\Vert v_{i,k+1}-\Tilde{v}_{i,k+1}\Vert < \tau_{i,k+1}$. So we get $e_{i,k} \leq \tau_{i,k}$. Then $\Vert E_k\Vert ^2\leq N\sum_{i=1}^N\sum_{j \neq i}a_{ij}^2\Vert e_{j,k}\Vert ^2 \leq N\sum_{i=1}^N\sum_{j \neq i}a_{ij}^2\Vert \tau_{j,k}\Vert ^2$. Taking conditional expectation with respect to $\mathcal{F}_k=\{\mathbf{x}_0,\mathbf{x}_1,...,\mathbf{x}_N\}$ and under Assumption 3--4, we obtain that
\begin{align}
    &\mathbb{E}[\Vert \mathbf{y}_{k+1}-\mathbf{1}_N\otimes\Bar{x}_k\Vert ^2|\mathcal{F}_k]\nonumber\\
    \leq &~(1-\eta_k |\rho_2|) \Vert \mathbf{y}_k-\mathbf{1}_N\otimes\Bar{x}_k\Vert ^2+\frac{\gamma_k^2}{\eta_k|\rho_2|}N(l_J+l_g\max_i\{ \Bar{\Lambda}_i\})^2\nonumber\\
  &+\eta_k ^2\mathbb{E}[\Vert (A\otimes I_d)R_k\Vert ^2|\mathcal{F}_k]+\frac{\eta_k}{|\rho_2|}N\sum_{i=1}^N\sum_{j \neq i}a_{ij}^2\tau_{j,k}^2. 
\end{align}

Let $q_k=\eta_k |\rho_2|$, $p_k=\Vert \mathbf{y}_k-\mathbf{1}_N\otimes\Bar{x}_k\Vert ^2$ and $b_k=\frac{\gamma_k^2}{\eta_k|\rho_2|}N(l_J+l_g\max_i\{ \Bar{\Lambda}_i\})^2+\eta_k ^2\mathbb{E}[\Vert (A\otimes I_d)R_k\Vert ^2|\mathcal{F}_k]+\frac{\eta_k}{|\rho_2|}N\sum_{i=1}^N\sum_{j \neq i}a_{ij}^2\tau_{j,k}^2$. Noting that $\sum_{k=0}^\infty \eta_k=\infty$, $\sum_{k=0}^\infty \eta_k^2 <\infty$, $\sum_{k=0}^\infty \frac{\gamma_k^2}{\eta_k}<\infty$, from Assumption \ref{weaktao} and Lemma 3, we have $\lim_{k\to \infty}\Vert y_{i,k}-\Bar{x}_k\Vert ^2 =0$ holds almost surely. 

\subsection{Proof of Theorem 1}

By Assumption \ref{lipschitz} and update rule (\ref{update1b}), there is 
\begin{align}
    \Vert &\lambda_{i,k+1}-\lambda_i^*\Vert ^2 \nonumber\\
    =&\Vert \Pi_{\Lambda_i}[\lambda_{i,k}+\gamma_k g(y_{i,k})]-\lambda_i^*\Vert ^2\nonumber\\
    \leq~& \Vert \lambda_{i,k}-\lambda_i^*\Vert ^2 +2\gamma_k(\lambda_{i,k}-\lambda_i^*)^\mathrm{T}g(y_{i,k}) +\gamma_k^2g^2(y_{i,k}) \nonumber\\
    \leq~& \Vert \lambda_{i,k}-\lambda_i^*\Vert ^2  +2\gamma_k(\lambda_{i,k}-\lambda_i^*)^\mathrm{T}(g(y_{i,k})-g(\Bar{x}_k))\nonumber\\
    &+\gamma_k^2g^2(y_{i,k}) +2\gamma_k(\lambda_{i,k}-\lambda_i^*)^\mathrm{T}g(\Bar{x}_k)\nonumber\\
     \leq~& \Vert \lambda_{i,k}-\lambda_i^*\Vert ^2  +2\gamma_kl_g(\lambda_{i,k}-\lambda_i^*)^\mathrm{T} (y_{i,k}-\Bar{x}_k)\nonumber\\
    &+\gamma_k^2C_g^2 +2\gamma_k(\lambda_{i,k}-\lambda_i^*)^\mathrm{T}g(\Bar{x}_k).
\end{align}

Using the definition $\nabla_{\lambda_i} \mathcal{L}_i(\mathbf{x}_k, \lambda_{i,k})=g(\Bar{x}_k)$ and the relationship $(\lambda_{i,k}-\lambda_i^*)^\mathrm{T}\nabla_{\lambda_i} \mathcal{L}_i(\mathbf{x}_k, \lambda_{i,k})=\mathcal{L}_i(\mathbf{x}_k, \lambda_{i,k})-\mathcal{L}_i(\mathbf{x}_k, \lambda_i^*)$, it holds that
\begin{align}
    \Vert &\lambda_{i,k+1}-\lambda_i^*\Vert ^2 \nonumber\\
    \leq ~& \Vert \lambda_{i,k}-\lambda_i^*\Vert ^2  +2\gamma_kl_g(\lambda_{i,k}-\lambda_i^*)^\mathrm{T}(y_{i,k}-\Bar{x}_k)\nonumber\\
    &+\gamma_k^2C_g^2 +2\gamma_k(\mathcal{L}_i(\mathbf{x}_k, \lambda_{i,k})-\mathcal{L}_i(\mathbf{x}_k, \lambda_i^*)). \label{lam1}
\end{align}

Next, according to Assumption 1 and (\ref{update1a}), we obtain
\begin{align}
    \Vert &x_{i,k+1}-x_i^*\Vert ^2 \nonumber\\
    &=\Vert \Pi_{\Omega_i}[x_{i,k}-\gamma_k(\nabla_i J_i(x_{i,k}, y_{i,k})+\nabla_ig(y_{i,k})\lambda_{i,k})]-x_i^*\Vert ^2\nonumber\\
    & \leq \Vert x_{i,k}-x_i^*\Vert ^2 +\gamma_k^2(l_J+l_g\Bar{\Lambda}_i)^2\nonumber\\
    &~~-2\gamma_k(x_{i,k}-x_i^*)^\mathrm{T}(\nabla_i J_i(x_{i,k}, y_{i,k})+\nabla_ig(y_{i,k})\lambda_{i,k}).  \label{lam2}
 \end{align}

Using the definition $\nabla_{x_i} \mathcal{L}_i(\mathbf{x}_k, \lambda_{i,k})=\nabla_i J_i(x_{i,k}, \Bar{x}_k)+\nabla_ig(\Bar{x}_k)\lambda_{i,k}$ and the relationship $(x_{i,k}-x_i^*)\nabla_{x_i} \mathcal{L}_i(\mathbf{x}_k, \lambda_{i,k})\leq \mathcal{L}_i(\mathbf{x}_k, \lambda_{i,k})-\mathcal{L}_i(\mathbf{x}^*, \lambda_{i,k})$, under Assumption 2, there is
\begin{align}
    &~~~-2\gamma_k(x_{i,k}-x_i^*)^\mathrm{T}(\nabla_i J_i(x_{i,k}, y_{i,k})+\nabla g(y_{i,k})\lambda_{i,k})\nonumber\\
    & \leq -2\gamma_k(x_{i,k}-x_i^*)^\mathrm{T}(\nabla_i J_i(x_{i,k}, \Bar{x}_k)+\nabla g(\Bar{x}_k)\lambda_{i,k})\nonumber\\
    & ~~~-2\gamma_k(x_{i,k}-x_i^*)^\mathrm{T}(\nabla_i J_i(x_{i,k}, y_{i,k})-\nabla_i J_i(x_{i,k}, \Bar{x}_k))\nonumber\\
    & ~~~-2\gamma_k (x_{i,k}-x_i^*)^\mathrm{T}(\nabla g(y_{i,k}) -\nabla g(\Bar{x}_k))\nonumber\\    
    & \leq -2\gamma_k(\mathcal{L}_i(\mathbf{x}_k, \lambda_{i,k})-\mathcal{L}_i(\mathbf{x}^*, \lambda_{i,k}))\nonumber\\
    & ~~~-2\gamma_k (G_J+G_g)(x_{i,k}-x_i^*)^\mathrm{T}(y_{i,k}-\Bar{x}_k).
    \label{lam3}
\end{align}

Combine (\ref{lam1}), (\ref{lam2}) and (\ref{lam3}), we have
\begin{align}
    \Vert &x_{i,k+1}-x_i^*\Vert ^2 + \Vert \lambda_{i,k+1}-\lambda_i^*\Vert ^2 \nonumber\\
     \leq~ &\Vert x_{i,k}-x_i^*\Vert ^2+\Vert \lambda_{i,k}-\lambda_i^*\Vert ^2  +\gamma_k^2(l_J+l_g\Bar{\Lambda}_i)^2 +\gamma_k^2C_g^2\nonumber\\
    & -2\gamma_k (G_J+G_g)(x_{i,k}-x_i^*)^\mathrm{T}(y_{i,k}-\Bar{x}_k)\nonumber\\
    & +2\gamma_kl_g(\lambda_{i,k}-\lambda_i^*)^\mathrm{T}(y_{i,k}-\Bar{x}_k)\nonumber\\
    &-2\gamma_k(\mathcal{L}_i(\mathbf{x}_k, \lambda_i^*)-\mathcal{L}_i(\mathbf{x}^*, \lambda_{i,k})).
\end{align}

According to the saddle point definition, we have $\mathcal{L}_i(\mathbf{x}_k, \lambda_i^*)-\mathcal{L}_i(\mathbf{x}^*, \lambda_{i,k})>0$. Let $a
_k=1$, $u_k=2\gamma_k (G_J+G_g)(x_{i,k}-x_i^*)^\mathrm{T}(y_{i,k}-\Bar{x}_k)+2\gamma_k(\mathcal{L}_i(\mathbf{x}_k, \lambda_i^*)-\mathcal{L}_i(\mathbf{x}^*, \lambda_{i,k}))$ and  $b_k=\gamma_k^2(l_J+l_g\Bar{\Lambda}_i)^2 +\gamma_k^2C_g^2+2\gamma_kl_g(\lambda_{i,k}-\lambda_i^*)^\mathrm{T}(y_{i,k}-\Bar{x}_k)$. Noting that $\sum_{k=0}^\infty \eta_k=\infty$, $\sum_{k=0}^\infty \eta_k^2 <\infty$, $\sum_{k=0}^\infty \frac{\gamma_k^2}{\eta_k}<\infty$, from Lemmas~4--5, we know that $\Vert x_{i,k}-x_i^*\Vert ^2+\Vert \lambda_{i,k}-\lambda_i^*\Vert ^2$ converges almost surely and $\sum_{k=0}^\infty\gamma_k(\mathcal{L}_i(\mathbf{x}_k, \lambda_i^*)-\mathcal{L}_i(\mathbf{x}^*, \lambda_{i,k})) \leq \infty$ holds almost surely. Since $\gamma_k$ is non-summable, we have that the limit inferior of $\mathcal{L}_i(\mathbf{x}_k, \lambda_i^*)-\mathcal{L}_i(\mathbf{x}^*, \lambda_{i,k})$ is zero, i.e., there exists a subsequence $\{l_k\}$ such that $\lim_{k \to \infty} (\mathcal{L}_i(x_{l_k}, \lambda_i^*)-\mathcal{L}_i(\mathbf{x}^*, \lambda_{l_k}))=0$ a.s. This further implies
that $\{(x_{l_k},\lambda_{l_k})\}$ converges to $(x_i^*,\lambda_i^*)$. Then $\lim_{k \to \infty}( \Vert x_{l_k}-x_i^*\Vert ^2+\Vert \lambda_{l_k}-\lambda_i^*\Vert ^2 )=0 $. Since we have shown that $\Vert x_{i,k}-x_i^*\Vert ^2+\Vert \lambda_{i,k}-\lambda_i^*\Vert ^2$ converges almost surely for any saddle point $(x_i^*,\lambda_i^*)$, we have  $\lim_{k \to \infty}( \Vert x_{i,k}-x_i^*\Vert ^2+\Vert \lambda_{i,k}-\lambda_i^*\Vert ^2)=0 $ a.s.

\subsection{Proof of Theorem 2}

  We take step sizes $\eta_k =\frac{1}{k^s}$, $\gamma_k=\frac{1}{k^t}$, where $0.5<s \leq t\leq 1$ and $2t-s>1$, and under Assumptions 1--6, those conditions satisfy the conditions in Lemma \ref{ybarx} and then the convergence result therefore follows immediately from the application of Theorem 1.

 Then the sequences generated under $\mathcal{F}^{(1)}$ and $\mathcal{F}^{(2)}$ are
\begin{align}
y_{i_0,k+1}^{(1)}=y_{i_0,k}^{(1)}+\eta_k\sum_{j\in \mathcal{N}_i}^N a_{ij}(\Tilde{v}_{j,k}^{(1)}-v_{i_0,k}^{(1)})+x_{i_0,k+1}^{(1)}-x_{i_0,k}^{(1)},\nonumber\\
y_{i_0,k+1}^{(2)}=y_{i_0,k}^{(2)}+\eta_k\sum_{j\in \mathcal{N}_i}^N a_{ij}(\Tilde{v}_{j,k}^{(2)}-v_{i_0,k}^{(2)})+x_{i_0,k+1}^{(2)}-x_{i_0,k}^{(2)}.\nonumber
   \end{align}

Define $\Delta x_{i_0,k}= x_{i_0,k}^{(1)}-x_{i_0,k}^{(2)}$ and $\Delta y_{i_0,k}= y_{i_0,k}^{(1)}-y_{i_0,k}^{(2)}$. Then, 
\begin{align}
    &\Delta x_{i_0,k+1}- \Delta x_{i_0,k}\nonumber\\
    &=  \Delta x_{i_0,k} -\gamma_k [\nabla_i J_i(x_{i_0,k}^{(1)},y_{i_0,k}^{(1)})-\nabla_i J_i(x_{i_0,k}^{(2)},y_{i_0,k}^{(2)})]\nonumber\\
    &~~-\gamma_k [\nabla_i g(y_{i_0,k}^{(1)})\lambda_{i_0,k}^{(1)}-\nabla_i g(y_{i_0,k}^{(2)})\lambda_{i_0,k}^{(2)}]- \Delta x_{i_0,k}\nonumber\\
    & \leq 2\gamma_k l_J+2\gamma_k l_g \max \{\Bar{\Lambda}_i\}, 
\end{align}
and
\begin{align}
    \Delta y_{i_0,k+1}&\leq \Delta y_{i_0,k} +\Delta x_{i_0,k+1}-\Delta x_{i_0,k}\nonumber\\
    & \leq \Delta y_{i_0,k} +2\gamma_k l_J +2\gamma_k l_g \max \{\Bar{\Lambda}_i\}.
\end{align}

Since $\Delta y_{i_0,0}=0 $, there is 
\begin{align}
~~~~~~ \Vert \Delta y_{i_0,k}\Vert &\leq \sum_{s=0}^{k-1} (2\gamma_s l_J +2\gamma_s l_g \max \{\Bar{\Lambda}_i\})\nonumber\\
&\leq 2(l_J+l_g \max \{\Bar{\Lambda}_i\})t\ln(k). \label{deltay}
\end{align}

From $\Delta y_{i_0,k} \leq 2\theta$, there exists $l_c > 0$ such that $y_{ij,k}, y_{ij,k}' \in [l_c\theta, (l_c+1)\theta]$. Without loss of generality, we proceed with two cases.

\textbf{Case~1:} If $y_{ij,k}\in [(l_c-1)\theta, l_c\theta]$ and $y_{ij,k}'\in [ l_c\theta,(l_c+1)\theta]$, then
\begin{align}
~~~~\delta_k&=\mathbb{P}[\mathcal{C}(y_{ij,k})=l_c\theta|y_i]-\mathbb{P}[\mathcal{C}(y_{ij,k}')=l_c\theta|y_i']\nonumber\\
     &\leq |y_i/\theta-l_c-(1+l_c-y_i'/\theta) |\nonumber\\
     &= |(y_i+y_i')/\theta-2l_c-1 |\nonumber\\
     &\leq |(l_c\theta+(l_c+1)\theta)/\theta-2l_c-1 |=0.\nonumber
\end{align}

In a similar way, one can obtain the same relationship when $y_{ij,k}\in[ l_c\theta,(l_c+1)\theta] $ and $y_{ij,k}'\in [(l_c-1)\theta, l_c\theta]$.

\textbf{Case~2:} If $y_{ij,k}, y_{ij,k}' \in [(l_c-1)\theta, l_c\theta]$, then
\begin{align}
~~~~\delta_k&=\mathbb{P}[\mathcal{C}(y_{ij,k})=l_c\theta|y_i]-\mathbb{P}[\mathcal{C}(y_{ij,k}')=l_c\theta|y_i']\nonumber\\
     &\leq |1+l_c-y_i/\theta-(1+l_c-y_i'/\theta) |\nonumber\\
     &\leq \frac{|\Delta y_{i_0,k}|}{\theta}. \nonumber
\end{align}

In a similar way, one can obtain the same relationship when $y_{ij,k}, y_{ij,k}'\in [ l_c\theta,(l_c+1)\theta]$.

Also there is $ |\Delta y_{i_0j,k}| \leq 2\theta$ and $ |\Delta y_{i_0,k}| =\sum_{j\in \mathcal{N}_i}^N |\Delta y_{i_0j,k}| \leq 2N\theta$. Additionally,
according to (\ref{deltay}), we have $ |\Delta y_{i_0j,k}| \leq 2\sqrt{N}(l_J+l_g \max \{\Bar{\Lambda}_i\})t\ln(k)$. Moreover, it should be noted that in differential privacy,
$\delta_k$ should be a small parameter in $(0, 1)$. Hence, we derive the expression of $\delta_k$ as
\begin{align}
~~~~~~~~ \delta_k= \min\{ 1, \frac{2t\ln(k)\sqrt{N}}{\theta}(l_J+l_g \max \{\Bar{\Lambda}_i\}) \}. \nonumber
\end{align}

\bibliographystyle{IEEEtran}
\bibliography{document}
\end{document}